\newtheorem{theorem}{Theorem}
\newtheorem{lemma}{Lemma}
\newtheorem{corollary}{Corollary}
\newtheorem*{remarks}{Remarks}
\newtheorem{proposition}{Proposition}
\begin{document}

\title[Complex Green operator: symmetry, percolation, interpolation]{Estimates for the complex Green operator: symmetry, percolation, and interpolation}

\author{S\'{e}verine Biard}
\author{Emil J.~Straube}

\address{Department of Mathematics, School of Engineering and Natural Sciences, University of Iceland, IS-107 Reykjav\'{i}k, Iceland}
\email{biard@hi.is}
\address{Department of Mathematics Texas A\&M University College Station, Texas, 77843}
\email{straube@math.tamu.edu}

\thanks{2000 \emph{Mathematics Subject Classification}: 32W10, 32V20}
\keywords{Complex Green operator, $\overline{\partial}_{M}$, CR-submanifolds of hypersurface type, compactness estimates, Sobolev estimates, form level symmetry and percolation}
\thanks{Supported in part by Qatar National Research Fund Grant NPRP 7-511-1-98 .}

\date{August 8, 2017 (revision)}

\begin{abstract}
Let $M$ be a pseudoconvex, oriented, bounded and closed CR-submanifold of $\mathbb{C}^{n}$ of hypersurface type. We show that Sobolev estimates for the complex Green operator hold  simultaneously for forms of symmetric bidegrees, that is, they hold for $(p,q)$--forms if and only if they hold for $(m-p,m-1-q)$--forms. Here $m$ equals the CR-dimension of $M$ plus one. Symmetries of this type are known to hold for compactness estimates. We further show that with the usual microlocalization, compactness estimates for the positive part percolate up the complex, i.e. if they hold for $(p,q)$--forms, they also hold for $(p,q+1)$--forms. Similarly, compactness estimates for the negative part percolate down the complex. As a result, if the complex Green operator is compact on $(p,q_{1})$--forms and on $(p,q_{2})$--forms ($q_{1}\leq q_{2}$), then it is compact on $(p,q)$--forms for $q_{1}\leq q\leq q_{2}$. It is interesting to contrast this behavior of the complex Green operator with that of the $\overline{\partial}$-Neumann operator on a pseudoconvex domain.
\end{abstract}

\maketitle

\section{Introduction}

A CR-submanifold $M$ of $\mathbb{C}^{n}$ is of hypersurface type, if the real codimension of the complex tangent space inside the real tangent space is one. We will also assume that $M$ is compact, closed, and orientable. We denote the complex dimension of the complex tangent space, i.e. the CR-dimension of $M$, by $(m-1)$ (thus defining $m$; $(m-1)$ is chosen in analogy to a hypersurface in $\mathbb{C}^{m}$). The $L^{2}$-Sobolev theory for the tangential Cauchy Riemann operator and the associated complex Green operator on these CR-manifolds is now at a comparable level of development to that of the $L^{2}$-Sobolev theory of the $\overline{\partial}$-Neumann operator on pseudoconvex domains, as far as various sufficient conditions for estimates (subelliptic, compactness, Sobolev) are concerned. Some of these results are surprisingly recent; \cite{StraubeZeytuncu15}, see \cite{BiardStraube16} for a survey. 

Suppose now that estimates are known for $(p,q)$--forms for \emph{some} pair $(p,q)$. Do these estimates imply corresponding estimates for forms of other bidegrees? Because the $\overline{\partial}$-Neumann problem involves boundary conditions, while the complex Green operator does not, the answer to this question differs for the two problems. For example, compactness and subellipticity for the $\overline{\partial}$-Neumann operator percolate up the $\overline{\partial}$-complex: if these estimates hold for $(p,q)$--forms, they also hold for $(p,q+1)$--forms, see for example \cite{Straube10a}, Proposition 4.5, and the references given there for the original sources. This property of the $\overline{\partial}$-Neumann operator fails for the complex Green operator (see section \ref{est-percolation}). On the other hand, compactness and subellipticity for the complex Green operator are known to hold simultaneously at symmetric bidegrees $(p,q)$ and $(p,m-1-q)$ (\cite{Kohn81}, Proposition on page 255, \cite{Koenig04}, page 289, \cite{BiardStraube16}, Lemma 8). This property manifestly fails for the $\overline{\partial}$-Neumann operator, for example in light of the characterization of compactness in the $\overline{\partial}$-Neumann problem on convex domains given in \cite{FuStraube98} (namely, compactness holds for $(p,q)$--forms if and only if the boundary of the domain contains no germ of a $q$-dimensional affine complex submanifold).

These observations notwithstanding, there is percolation of estimates for the $\overline{\partial}_{M}$ complex, but it is more subtle. If $\mathcal{P}^{+}$, $\mathcal{P}^{-}$, and $\mathcal{P}^{0}$ denote the usual microlocalizations, and $G_{(p,q)}$ denotes the complex Green operator for $(p,q)$--forms, then compactness of $\mathcal{P}^{+}G_{(p,q)}$ percolates \emph{up} the complex, while the analogous estimates for $\mathcal{P}^{-}G_{(p,q)}$ percolate \emph{down}. Because $\mathcal{P}^{0}G_{(p,q)}$ is elliptic, the full estimates then interpolate: if the
complex Green operator is compact on $(p,q_{1})$--forms and on $(p,q_{2})$--forms ($q_{1}\leq q_{2}$), then it is compact on $(p,q)$--forms for $q_{1}\leq q\leq q_{2}$.\footnote{After this paper was posted and submitted for publication, we became aware of  \cite{Khanh16}, where a closely related result is shown in the case where $M$ is an actual hypersurface. We thank Ken Koenig for pointing out this reference to us.}


The simultaneous validity of compactness or subellipticity for the complex Green operator mentioned above has been shown with the help of a somewhat ad hoc local `Hodge-$\star$--like' operator that, unlike the actual Hodge-$*$, maps $(p,q)$--forms to $(p, m-1-q)$--forms and intertwines $\overline{\partial}_{M}$ and $\overline{\partial}_{M}^{*}$ modulo terms of order zero (\cite{Kohn81, Koenig04, BiardStraube16}). These terms are of no consequence for compactness and subellipticity, they can simply be absorbed. However, when one considers Sobolev estimates, these error terms matter. To avoid them, we use the natural pairing between a $(p,q)$--form $\alpha$ and an $(m-p, m-1-q)$--form $\beta$ on $M$ given by $\int_{M}\alpha\wedge\beta$, and the associated Hodge-$\star$ operator to define a conjugate linear operator
$A_{p,q}: L^{2}_{(p,q)}(M) \rightarrow L^{2}_{(m-p,m-1-q)}(M)$. Accommodating a technicality requires adjusting the metric on forms on $M$ and taking the $\star$--operator with respect to this adjusted metric. The operators so defined intertwine $\overline{\partial}_{M}$ and $\overline{\partial}_{M}^{*}$ without error terms. As a result, Sobolev estimates for the complex Green operator (and many others) hold for $(p,q)$--forms if and only if they hold for $(m-p,m-1-q)$--forms.

\section{Preliminaries and notation}\label{pre}

In this section, we consider a smooth compact and orientable CR-submanifold $M$ in $\mathbb{C}^n$, without boundary. Define $m$ via $dim_{\mathbb{C}}T^{\mathbb{C}}M = (m-1)$, where $T^{\mathbb{C}}_{P}M$ denotes the complex tangent space at $P$, i.e. $T_P M\cap JT_P M$, where $T_P M$ is the real tangent space to $M$ and $J$ the complex structure map on $\mathbb{C}^{n}$ (i.e. multiplication by $i$). This dimension is independent of $P$.
$M$ is said to be of hypersurface type if, at each point $P\in M$, $T^{\mathbb{C}}_P M$ has real codimension one in $T_P M$. A vector field 
$X(z)=\sum_{j=1}^{n}a_{j}(z)\partial/\partial z_{j}$ (on an open set of $\mathbb{C}^{n}$ or of $M$) is called of type $(1,0)$, while a field $Y(z)=\sum_{j=1}^{n}b_{j}(z)\partial/\partial\overline{z_{j}}$ is of type $(0,1)$, as usual. $X$ is tangential to $M$ if and only if $(a_{1}(z), \hdots, a_{n}(z)) \in T^{\mathbb{C}}_{z}M$, for all $z$; similarly, $Y(z)$ is tangential if and only if $(\overline{b_{1}(z)}, \hdots, \overline{b_{n}(z)}) \in T^{\mathbb{C}}_{z}M$, for all $z$. We say that $X\in T^{1,0}M$, $Y\in T^{0,1}M$ ($T^{1,0}M$ and $T^{\mathbb{C}}M$ are thus naturally isomorphic). For detailed information on CR-(sub)manifolds, the reader may consult \cite{Boggess91, BER99}.

Because $M$ is orientable, there exits a purely imaginary vector field $T$ on $M$ of unit length that is orthogonal to $T^{\mathbb{C}}M$ at all points. Let $\eta$ be the form dual to $T$, that is $\eta(T)\equiv 1$, and $\eta \equiv 0$ on $T^{1,0}M \oplus T^{0,1}M$. Denote by $L_{m}$ the vector field $L_{m}:=(1/\sqrt{2})(T-iJT)$ defined on $M$; $L_{m}$ is of type $(1,0)$ and has length one. Near a point $P\in M$, choose an orthonormal basis $\{L_{1}, \hdots, L_{(m-1)}\}$ of $T^{1,0}M$. Choose $(1,0)$-forms $\{\omega_{1}, \hdots, \omega_{m}\}$ that at each point vanish on $\{L_{1}, \hdots, L_{m}\}^{\perp}$ and so that $\omega_{k}(L_{j})=\delta_{k,j}$. These are the usual local frames. Note that when we restrict $\omega_{m}$ to $M$ as a form, this restriction does not equal $\eta$; rather, we have $\omega_{m}|_{M}=(1/\sqrt{2})\eta$ (see for example \cite{Range86}, ch. III.3 for a discussion of the Hermitian structure on $\mathbb{C}^{n}$ that pays attention to norms of the $dz_{j}$, etc.).

The space of $(p,q)$--forms on $M$ at $P$,
$\Lambda^{p,q}T^*_{P}M$, is defined as those 
forms in $\Lambda^{p,q} T^*_{P}\mathbb{C}^n$ that have the form
\begin{equation}\label{p-q-forms}
u=\sideset{}{'}\sum_{{\vert I\vert=p, \vert J\vert =q}} u_{IJ}\omega_I(P)\wedge \overline{\omega_J}(P), \; I\subseteq\lbrace{1,\dots, m}\rbrace,\; J\subseteq \lbrace{1,\dots, m-1}\rbrace.
\end{equation}
The notation $\sum'$ indicates summation over strictly increasing multi-indices. This definition is independent of the choice of orthonormal basis $\{L_{1},\hdots, L_{(m-1)}\}$ of $T^{1,0}M$ near $P$ ($L_{m}$ is defined globally, and therefore, so is $\omega_{m}$).
\footnote{When $p\neq 0$, and $M$ is not generic, this definition differs from that given in \cite{Boggess91}. However, in this situation, the definition in \cite{Boggess91} allows for $(p,q)$-forms whose restrictions to $M$ vanish, so that the resulting complex need not be isomorphic to the intrinsically defined complex. In fact, section 8.3 in \cite{Boggess91}, with the extrinsic definition used there, requires the assumption that $M$ be generic \cite{Boggess16}.} 

The (extrinsic) tangential Cauchy--Riemann operator is now defined in the usual way.  Locally, represent a $(p,q)$--form as in \eqref{p-q-forms}. Extend $u$ coefficientwise to a form $\widetilde{u}$ defined in a full neighborhood in $\mathbb{C}^{n}$ (note that the local frame `lives' in such a full neighborhood). Then
\begin{equation}\label{d-bar-M}
\bar\partial_M u=(\bar\partial \tilde{u})_{t_M} 
\end{equation}
where $t_M: \Lambda^{p,q}T^*_{P}\mathbb{C}^n\rightarrow \Lambda^{p,q}T^*_{P}M$ is the orthogonal projection, for $P\in M$ (that is $t_{M}$ gives the tangential part of a form). This definition is independent of the local frame and/or the extension chosen, so that $\overline{\partial}_{M}$ is well defined by \eqref{d-bar-M}. We also have $\bar\partial_M\circ\bar\partial_M=0$; this property is inherited from the $\overline{\partial}$-complex on $\mathbb{C}^{n}$. It is useful to have the following expression for $\overline{\partial}_{M}$ in a local frame:
\begin{equation}\label{localexpress}
\bar\partial_M u=\sum_{k=1}^{m-1}\sideset{}{'}\sum_{\vert I\vert =p, \vert J\vert =q} \bar{L}_k(u_{IJ})\bar{\omega}_k\wedge \omega_I\wedge \overline{\omega_J} + \text{terms of order zero}\;. 
\end{equation}
Here, terms of order zero means terms where the coefficients of $u$ are not differentiated.
We refer to \cite{Boggess91, BiardStraube16} for more details.

The pointwise inner product between $(p,q)$--forms at $P\in M$,
\begin{equation}\label{pointwise}
 <u,v>\;\; = \sideset{}{'}\sum_{{\vert I\vert=p, \vert J\vert =q}}u_{IJ}\overline{v_{IJ}}
\end{equation}
is independent of the choice of the local othonormal frame. 
It provides an $L^2$-inner product on $M$ by integrating against the (Euclidean) volume element on $M$, as usual:
\begin{equation}\label{L2product}
(u,v)_{L^2_{(p,q)}(M)} =\int_M <u(z),v(z)> dV_M(z)\,.
\end{equation}
We denote by $L^2_{(p,q)}(M)$, $0\leq p\leq m, 0\leq q\leq (m-1)$, the completion of $\Lambda^{p,q}T^{*}M$ under the norm induced by this inner product . 


$\overline{\partial}_{M}$ extends to an unbounded operator $\bar\partial_{M}:L^2_{(p,q)}(M)\rightarrow  L^2_{(p,q+1)}(M)$ acting in the sense of distributions, with the maximal domain of definition. That is, we set $dom(\bar\partial_M)=\lbrace{u\in L^2_{(p,q)}(M)\mid \bar\partial_M u\in L^2_{(p,q+1)}(M)}\rbrace$, where $\overline{\partial}$ acts in a local frame as in \eqref{localexpress}. Whether or not the resulting coefficients are in $L^{2}$ locally does not depend on the choice of the frame.
As a closed and densely defined operator on $L^2_{(p,q)}(M)$, $1\leq q\leq m-1$, $\bar\partial_M$ has a Hilbert space adjoint, denoted by $\bar\partial_M^*$. In a local frame, integration by parts gives
\begin{equation}\label{adjoint}
\bar\partial^*_M u=-\sum_{j=1}^{m-1}\sideset{}{'}\sum_{\vert I\vert =p, \vert K\vert =q-1} L_j( u_{IjK})\omega_I\wedge \overline{\omega_K} +\text{ terms of order zero} \; .
\end{equation}
A crucial fact is that $\overline{\partial}_{M}$, hence $\overline{\partial}_{M}^{*}$, have closed range, see \cite{Baracco1, Baracco2, Baracco3, BiardStraube16}.
A useful technical tool is the fact that
\begin{equation}\label{dense}
 C^{\infty}_{(p,q)}(M)\cap dom(\overline{\partial}_{M})\cap dom(\overline{\partial}_{M}^{*})\;
 \text{is dense in}\; dom(\overline{\partial}_{M})\cap dom(\overline{\partial}_{M}^{*})\;;
\end{equation}
that is, the smooth forms are dense in $dom(\overline{\partial}_{M})\cap dom(\overline{\partial}_{M}^{*})$. This is a standard consequence of the Friedrichs Lemma (see for example \cite{ChenShaw01}, Appendix D); in contrast to the $\overline{\partial}$-complex, there are no boundary conditions for the $\overline{\partial}_{M}$--complex that require extra care with the regularization.

Finally, let $1\leq q\leq (m-2)$. The complex Laplacian on $L^2_{(p,q)}(M)$, denoted by $\square_{(p,q)}$, is defined as $\bar\partial_M\bar\partial_M^*+\bar\partial^*_M\bar\partial_M$; its domain $dom(\square_{(p,q)})$ is understood to be the set of forms where this expression makes sense. This operator is the unique self-adjoint operator associated to the quadratic form 
$Q_{p,q}(u,u)=(\bar\partial_M u, \bar\partial_M u)_{L^2_{(p,q+1)}(M)} + (\bar\partial_M^* u, \bar\partial_M^* u)_{L^2_{(p,q-1)}(M)}$, via
\begin{equation}\label{quadform}
Q_{p,q}(u,u) = (\Box_{p,q}u,u)_{L^{2}_{(p,q)}(M)} \; , \;u\in dom(\square_{p,q})\; .
\end{equation}
We denote $ker(\square_{(p,q)})=\mathcal{H}_{(p,q)}(M)$, the harmonic $(p,q)$-forms on $M$ with $L^2$-coefficients. The dimension of $\mathcal{H}_{(p,q)}(M)$ is known to be finite when $1\leq q\leq (m-2)$ (\cite{Nicoara06, HarringtonRaich10}), but it need not be zero, even for strictly pseudoconvex $M$; see the discussion in \cite{StraubeZeytuncu15} pp. 1076--1077, and following Corollary 1 there. That the dimension of $\mathcal{H}_{(p,q)}(M)$ is finite is reflected in a version of the basic $L^{2}$--estimate where the norm of the harmonic component of a form $u$ is replaced by $\|u\|_{-1}$ (see \cite{StraubeZeytuncu15}, estimate 7,\cite{BiardStraube16}, Lemma 5):
\begin{multline}\label{basicL2}
\|u\|_{L^{2}_{(p,q)}(M)}^{2} \lesssim \|\overline{\partial}_{M}u\|_{L^{2}_{(p,q+1)}(M)}^{2} + \|\overline{\partial}_{M}^{*}u\|_{L^{2}_{(p,q-1)}(M)}^{2} + \|u\|_{-1}^{2} \;,\\
 u \in dom(\overline{\partial}_{M}) \cap dom(\overline{\partial}_{M}^{*})\;,\;0\leq p\leq m\;,\;1\leq q\leq (m-2)\;.
\end{multline}
Because the range of $\overline{\partial}_{M}$ is closed, so is that of $\Box$. Also, $\Box_{(p,q)}$ maps $\mathcal{H}_{(p,q)}(M)^{\perp}$ onto itself.
The complex Green operator, $G_{p,q}$ is the inverse operator of the restriction of $\square_{(p,q)}$ to $\mathcal{H}_{(p,q)}(M)^{\perp}$.
It is convenient to extend it to all of $L^{2}_{(p,q)}(M)$ by setting it equal to zero on $\mathcal{H}_{(p,q)}(M)$. $G_{p.q}$ is a bounded self--adjoint operator. A detailed discussion of these matters may be found in \cite{BiardStraube16, ChenShaw01} (partly for $(0,q)$--forms, but the arguments are the same for $(p,q)$--forms).\footnote{The complex Green operators can also be defined for bidegrees $(p,0)$ and $(p,m-1)$ in much the same way as the $\overline{\partial}$-Neumann operators $N_{p,0}$ are defined, and appropriate statements can easily be given. This does not add to the main thrust of this paper; accordingly, we do not consider these cases.}

\section{Estimates on symmetric bidegrees}\label{duality}

We first construct a conjugate linear operator $A_{(p,q)}$ mapping $(p,q)$-forms to $(m-p,m-q-1)$-forms on $M$ that intertwines $\overline{\partial}_{M}$ and $\overline{\partial}_{M}^{*}$. Our arguments involve on the one hand integration by parts in an integral of a wedge product of forms, on the other hand integration by parts (moving $\overline{\partial}_{M}$ to the other side as $\overline{\partial}_{M}^{*}$) in the inner product \eqref{L2product} between forms. Thus the Hodge-$\star$ operator arises naturally. Note that the pointwise inner product \eqref{pointwise} between two forms in $\Lambda^{p,q}T^{*}_{P}M$ does not necessarily agree with the inner product of their restrictions to $M$ at $P$. The reason is that the unit form $\omega_{m}(P) \in \Lambda^{1,0}T^{*}_{P}M$ restricts to $(1/\sqrt{2})\eta \in \mathbb{C}T_{P}^{*}M$, a form of norm $(1/\sqrt{2})$. In order to rectify this situation, we change the metric on $\mathbb{C}TM$, hence on $\mathbb{C}T^{*}M$ by declaring, at each point $P\in M$, $\{\omega_{1}, \hdots, \omega_{(m-1)}, \overline{\omega_{1}}, \hdots, \overline{\omega_{(m-1)}}, (1/\sqrt{2})\eta\}$ to be an orthonormal basis. In other words, we rescale in the direction of $\eta$ by a factor of $\sqrt{2}$ (equivalently, by a factor of $1/\sqrt{2}$ in the direction of $T$). When we equip $M$ with this new Riemannian structure, the restriction of forms in $\Lambda^{p,q}T^{*}_{P}M$ to $M$ (restriction as forms) becomes an isometry (at the point $P$). We use $\widetilde{\star}$, $<\;,\;>_{\sim}$, and $d\widetilde{V}$ to denote, respectively, the Hodge-$\star$ operator, the pointwise inner product on forms, and the volume element on $M$ with respect to this new Riemannian structure. All properties of the Hodge-$\star$ operator that we will use can be found in \cite{Range86}, section III.3.4 and/or in \cite{Morita01}, section 4.1 (c).

We now define the conjugate linear operator $A_{p,q}: L^{2}_{(p,q)}(M) \rightarrow L^{2}_{(m-p, m-1-q)}(M)$ via
\begin{multline}\label{Apq}
\left(v, A_{p,q}u\right) := \sqrt{2}\int_{M}u\wedge v \;, \;\;u\in L^{2}_{(p,q)}(M),\; v\in L^{2}_{(m-p, m-1-q)}(M) \; , \\ 
0\leq p\leq m\;,\,0\leq q\leq (m-1)\;.
\end{multline}
This definition is analogous to the one in the appendix of \cite{RaichStraube08}.
It will be convenient to express $A_{p,q}$ with the help of $\widetilde{\star}$. We have

\begin{multline}\label{Apq-star}
\left(v, A_{p,q}u\right) = \sqrt{2}\int_{M}u\wedge v = \sqrt{2}\int_{M}(\widetilde{\star}\widetilde{\star}(u|_{M}))\wedge v = \sqrt{2}\int\overline{\widetilde{\star}\left(\widetilde{\star}\overline{(u|_{M})}\right)}\wedge v \\
= \sqrt{2}\int_{M}v\wedge\overline{\widetilde{\star}\left(\widetilde{\star}\overline{(u|_{M})}\right)}
= \sqrt{2}\int_{M}<v|_{M}, \,\widetilde{\star}\overline{(u|_{M})}>_{\sim}d\widetilde{V} 
= \int_{M}<v,\,\widetilde{\star}\overline{(u|_{M}}))>\,dV \;.
\end{multline}
Therefore,
\begin{equation}\label{Apq-star2}
 A_{p,q}u = \widetilde{\star}\,\left(\overline{u|_{M}}\right) \;,\; u\in L^{2}_{(p,q)}(M)\; ,
\end{equation}
in the sense that $A_{p,q}u$ equals the unique form in $L^{2}_{(m-p, m-1-q)}(M)$ whose restriction to $M$ equals $\widetilde{\star}(\overline{u|_{M}})$ (that is, $\eta$ is replaced by $\omega_{m}$). The properties of $A_{p,q}$ that we will need are summarized in the following proposition.

\begin{proposition}\label{Apqproperties}
Let $0\leq p\leq m$, $0\leq q\leq (m-1)$. Then
\begin{eqnarray}
 A_{p,q}: L^{2}_{(p,q)}(M) \rightarrow L^{2}_{(m-p,m-1-q)}(M) \;\,\mbox{is an isometry}\;, \label{isometry}
\end{eqnarray}
\begin{eqnarray}
A_{m-p,m-q-1}A_{p,q}u= u, \quad \forall u\in L^{2}_{(p,q)}(M)\;.\label{Id}
\end{eqnarray}
Let $0\leq p\leq m$, $1\leq q\leq (m-2)$. Then
 \begin{eqnarray}
\bar\partial_M A_{p,q}u= (-1)^{p+q}A_{p, q-1}\bar\partial_M^* u, \quad \forall u\in 
dom(\overline{\partial}_{M}^{*}) \subseteq L^{2}_{(p,q)}(M)\;,\label{Adbar}\end{eqnarray}
\begin{eqnarray}
A_{p,q}\bar\partial_M u= (-1)^{p+q}\bar\partial_M^*A_{p,q-1} u, \quad \forall u\in dom(\overline{\partial}_{M}) \subseteq L^{2}_{(p,q-1)}(M)\;,\label{dbarA}
\end{eqnarray}
\begin{eqnarray}
A_{p,q}\Box_{p,q}u = \Box_{m-p, m-1-q}A_{p,q}u, \quad \forall u\in dom(\Box_{p,q})\subseteq L^{2}_{(p,q)}(M) \; , \label{commutes}
\end{eqnarray}
\begin{eqnarray}
A_{p,q}G_{p,q} = G_{m-p,m-1-q}A_{p,q} \; , \label{commutes2}
\end{eqnarray}
\begin{eqnarray}
A_{p,q}(\mathcal{H}_{(p,q)}(M)) = \mathcal{H}_{(m-p,m-1-q)}(M) \; . \label{harmonicforms}
\end{eqnarray}
\end{proposition}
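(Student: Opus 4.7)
The plan is to prove the seven conclusions in the listed order, leaning on the pairing formula \eqref{Apq}, the expression \eqref{Apq-star2}, and one integration-by-parts lemma on the closed manifold $M$. Conclusions \eqref{isometry} and \eqref{Id} follow directly from \eqref{Apq-star2}: the restriction $u\mapsto u|_{M}$ is an isometry from $L^{2}_{(p,q)}(M)$ into forms on $M$ equipped with the adjusted inner product and volume element, by design of the $\sqrt{2}$-rescaling in the $\eta$-direction; complex conjugation is an isometry; and $\widetilde{\star}$ is an isometry with respect to the adjusted metric. Composing these yields \eqref{isometry}. For \eqref{Id} one uses that $\widetilde{\star}^{2}=+1$ on $(p,q)$-forms (the sign $(-1)^{(p+q)(2m-1-p-q)}$ is always $+1$ since the exponent $(p+q)(p+q+1)$ is even) and that $\widetilde{\star}$ commutes with complex conjugation, so that $A_{m-p,m-1-q}A_{p,q}u=\widetilde{\star}\,\overline{\widetilde{\star}\,\overline{u|_M}}=\widetilde{\star}\widetilde{\star}\,u=u$.

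The central analytic step is the lemma: for $u\in C^{\infty}_{(p,q-1)}(M)$ and $w\in C^{\infty}_{(m-p,m-1-q)}(M)$,
\[\int_{M}\overline{\partial}_{M}u\wedge w \;=\; (-1)^{p+q}\int_{M}u\wedge\overline{\partial}_{M}w.\]
I would apply Stokes on the closed manifold: $\int_{M}d(u\wedge w)=0$, since $u\wedge w$ has total degree $2m-2$. Because $d(u\wedge w)$ has degree $2m-1$ on $M$, it lies in the top bidegree $(m,m-1)$. A bidegree count then forces only the $(p,q)$-component of $du$ and the $(m-p,m-q)$-component of $dw$ to contribute to this top part; by the extrinsic definition $\overline{\partial}_{M}=t_{M}\circ\overline{\partial}$, these components are exactly $\overline{\partial}_{M}u$ and $\overline{\partial}_{M}w$. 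The Leibniz sign $(-1)^{p+q-1}$ then rearranges to the stated identity. Testing this lemma against smooth $\phi\in C^{\infty}_{(m-p,m-1-q)}(M)$ and invoking \eqref{Apq} on both factors gives $(\overline{\partial}_{M}\phi,A_{p,q-1}u)=(-1)^{p+q}(\phi,A_{p,q}\overline{\partial}_{M}u)$ for smooth $u$, which identifies $A_{p,q-1}u\in\mathrm{dom}(\overline{\partial}_{M}^{*})$ with the formula \eqref{dbarA}; extension from smooth $u$ to $u\in\mathrm{dom}(\overline{\partial}_{M})$ uses the density \eqref{dense} together with closedness of $\overline{\partial}_{M}^{*}$.

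Identity \eqref{Adbar} I would deduce from \eqref{dbarA} by conjugating the inner product identity and relabelling via \eqref{Id}. Conjugating gives $(A_{p,q-1}u,\overline{\partial}_{M}\phi)=(-1)^{p+q}(A_{p,q}\overline{\partial}_{M}u,\phi)$. Setting $v=A_{p,q-1}u$, so that $u=A_{m-p,m-q}v$ by \eqref{Id}, and using the adjoint relation $(v,\overline{\partial}_{M}\phi)=(\overline{\partial}_{M}^{*}v,\phi)$ valid for smooth $v$, one obtains $\overline{\partial}_{M}^{*}v=(-1)^{p+q}A_{p,q}\overline{\partial}_{M}A_{m-p,m-q}v$; applying $A_{m-p,m-1-q}$ to both sides and reindexing $(p,q)\mapsto(m-p,m-q)$ yields \eqref{Adbar} (for smooth $v$, then extending by density and closedness of $\overline{\partial}_{M}$). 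The last three properties are then formal: composing \eqref{Adbar} and \eqref{dbarA} on each summand of $\square_{p,q}$ produces two factors $(-1)^{p+q}$ that cancel, giving \eqref{commutes}; \eqref{harmonicforms} follows because the isometric isomorphism $A$ intertwines $\square$, hence maps kernels bijectively; and \eqref{commutes2} follows by applying $G_{m-p,m-1-q}$ to the identity $\square_{m-p,m-1-q}A_{p,q}G_{p,q}v=A_{p,q}v$ for $v\in\mathcal{H}_{(p,q)}^{\perp}$, using that $A$ carries $\mathcal{H}^{\perp}$ to $\mathcal{H}^{\perp}$.

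The main obstacle is the integration-by-parts lemma, in particular the bidegree analysis: verifying that among all bidegree components of $du$ and $dw$ only $\overline{\partial}_{M}u$ and $\overline{\partial}_{M}w$ contribute to the top part of $d(u\wedge w)$ on $M$, and that the intrinsic $(p,q)$-component of $du|_{M}$ coincides with the extrinsically defined $\overline{\partial}_{M}u$. A secondary source of care is the conjugate-linearity of $A_{p,q}$ in the conjugation step from \eqref{dbarA} to \eqref{Adbar}, and the uniform tracking of the signs $(-1)^{p+q}$ across all four intertwining identities.
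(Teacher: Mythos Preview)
Your proposal is correct and takes essentially the same approach as the paper: both derive \eqref{isometry} and \eqref{Id} from the isometry and involution properties of $\widetilde{\star}$, establish \eqref{dbarA} via the Stokes/bidegree integration-by-parts identity $\int_{M}\overline{\partial}_{M}u\wedge v = (-1)^{p+q}\int_{M}u\wedge\overline{\partial}_{M}v$ (the paper phrases the bidegree step as $\int_{M}\partial_{M}u\wedge v=0$), obtain \eqref{Adbar} from \eqref{dbarA} together with \eqref{Id}, and read off \eqref{commutes}--\eqref{harmonicforms} formally. The only cosmetic difference is that the paper carries out the passage from the wedge pairing to the $L^{2}$ inner product explicitly through $\widetilde{\star}$ in \eqref{computation2}--\eqref{computation3}, whereas you invoke the defining relation \eqref{Apq} directly; by \eqref{Apq-star2} these are the same computation.
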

It is part of the proposition that in equations \eqref{Adbar} -- \eqref{commutes}, if $u$ is in the domain of $\overline{\partial}_{M}^{*}$, $\overline{\partial}_{M}$, and $\Box_{p,q}$, respectively, then $A_{p,q}u$ and $A_{p,q-1}u$ are in the appropriate domain of the operator on the other side of the equation.
\begin{proof}
It suffices to prove all the statements for smooth forms; they are dense in $L^{2}_{(p,q)}(M)$ and in the graph norms of both $\overline{\partial}_{M}$ and $\overline{\partial}_{M}^{*}$ (this is immediate from a standard mollifier argument and the Friedrichs Lemma, as there are no boundary conditions to take into account).

\eqref{isometry} and \eqref{Id} are immediate from \eqref{Apq-star2} and the fact that $\widetilde{\star}$ is an isometry in the modified metric on $M$, and that $\widetilde{\star}\widetilde{\star}u = u$ (there is a factor $(-1)^{(p+q)(2m-1-p-q)}$; however, $(p+q)(2m-1-p-q) \equiv 0 \,mod\, 2$).


To verify \eqref{dbarA}, let $u\in \Lambda^{p,q-1}T^{*}M$, $v\in \Lambda^{m-p,m-1-q}T^{*}M$.
Note that 
\begin{equation}
\int_{M}\overline{\partial}_{M}u\wedge v = \int_{M}(\partial_{M}+\overline{\partial}_{M})u\wedge v = \int_{M}du\wedge v 
\end{equation}
($\displaystyle{\int_{M}\partial_{M}u\wedge v =0}$, because at least one of the $\omega_{j}$, $1\leq j\leq m$, will appear twice, or there will be an $\omega_{j}$ with $j>m$; in either case, the integral over $M$ vanishes). Integration by parts therefore gives
\begin{multline}\label{computation2}
 (v, A_{p,q}\overline{\partial}_{M}u)_{L^{2}_{(m-p,m-1-q)}(M)} = \sqrt{2}\int_{M}\overline{\partial}_{M}u\wedge v = (-1)^{p+q}\sqrt{2}\int_{M}u\wedge\overline{\partial}_{M}v \\
 = (-1)^{p+q}\sqrt{2}\int_{M}\widetilde{\star}(\overline{\widetilde{\star}\overline{u|_{M}})}\wedge \overline{\partial}_{M}v 
 = (-1)^{p+q}\sqrt{2}\int_{M}\overline{\partial}_{M}v\wedge \widetilde{\star}(\overline{\widetilde{\star}\overline{u|_{M}})} \; .
\end{multline} 
We have also used that $\widetilde{\star}$ is real, so that $\widetilde{\star}(\overline{\widetilde{\star}\overline{u|_{M}})} = u|_{M}$. Using $\widetilde{\star}$ to mediate between  wedge products and inner products gives
 \begin{multline}\label{computation3}
 \sqrt{2}\int_{M}\overline{\partial}_{M}v\wedge \widetilde{\star}(\overline{\widetilde{\star}\overline{u|_{M}})}
 = \sqrt{2}\int_{M}<\overline{\partial}_{M}v|_{M}, \widetilde{\star}\overline{u|_{M}}>_{\sim}d\widetilde{V} \\
 = (\overline{\partial}_{M}v, \widetilde{\star}\overline{u|_{M}})_{L^{2}_{(m-p,m-q)}(M)} 
 = (v, \overline{\partial}_{M}^{*}A_{p,q-1}u)_{L^{2}_{(m-p, m-1-q)}(M)} \; .
\end{multline}
In the second equality, we use that $\sqrt{2}<\,,\,>_{\sim}d\widetilde{V}=\,<\,,\,>dV$ , as well as \eqref{Apq-star2}. \eqref{computation2} and \eqref{computation3} now imply \eqref{dbarA}.

\eqref{Adbar} follows from \eqref{dbarA} and \eqref{Id}.

Using \eqref{Adbar} and \eqref{dbarA} to move the $A_{r,s}$ operators successively past $\overline{\partial}_{M}$ and $\overline{\partial}_{M}^{*}$ gives \eqref{commutes}; the factor $(-1)^{p+q}$ arises twice and so cancels.

Finally, \eqref{commutes2} and \eqref{harmonicforms} are consequences of \eqref{commutes}.
\end{proof}



Proposition \ref{Apqproperties} immediately gives symmetry of estimates with respect to form levels for the complex Green operator.
We say that $G_{p,q}$ is regular in Sobolev spaces if $\|G_{p,q}u\|_{s} \leq C_{s}\|u\|_{s}$, $s\geq 0$, where $\|\cdot\|_{s}$ denotes the $L^{2}$--Sobolev norm (defined coefficientwise in local charts; this involves choosing a cover of $M$ by charts, but all the resulting norms are equivalent). We say that $G_{p,q}$ is globally regular if it maps ($C^{\infty}$) smooth forms to smooth forms.

\begin{theorem}\label{symmetry}
Let $M$ be a smooth compact pseudoconvex orientable CR-submanifold of $\mathbb{C}^n$ of hypersurface type, of CR-dimension $m-1$. Let $0\leq p\leq m$, $1\leq q\leq (m-2)$. Then $G_{p,q}$ is regular in Sobolev norms (respectively globally regular) if and only if $G_{m-p,m-1-q}$ is.
\end{theorem}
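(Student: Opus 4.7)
The plan is to read off Theorem \ref{symmetry} directly from the intertwining relation \eqref{commutes2} of Proposition \ref{Apqproperties}, once we verify that the operators $A_{p,q}$ behave well in Sobolev scales. First, I would rewrite \eqref{commutes2} in a form that exhibits $G_{m-p,m-1-q}$ as a conjugate of $G_{p,q}$. Applying \eqref{commutes2} to $A_{m-p,m-1-q}v$ and using the involution property \eqref{Id}, I obtain
\begin{equation*}
G_{m-p,m-1-q}v \;=\; A_{p,q}\,G_{p,q}\,A_{m-p,m-1-q}v\,,\qquad v\in L^{2}_{(m-p,m-1-q)}(M).
\end{equation*}
Thus the proof reduces to showing that every $A_{r,s}$ is bounded on the Sobolev space $H^{s}$ (respectively preserves $C^{\infty}$).

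Next, I would observe that, by the explicit formula \eqref{Apq-star2}, the operator $A_{r,s}$ acts on a form as conjugation of coefficients in a local frame followed by an application of the modified Hodge-$\widetilde{\star}$. The latter is an algebraic (order-zero) operation whose matrix entries in the smooth frames $\{\omega_{I}\wedge\overline{\omega_{J}}\}$ are smooth functions on $M$ (the frames $\{L_{1},\dots,L_{m}\}$ and $\{\omega_{1},\dots,\omega_{m}\}$ are smooth in a neighborhood of each point, and the change-of-frame matrices between different local frames are smooth). Hence $A_{r,s}$ is a bounded operator on $H^{s}_{(r,s)}(M)$ for every $s\geq 0$, and it maps $C^{\infty}_{(r,s)}(M)$ into $C^{\infty}_{(m-r,m-1-s)}(M)$.

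Combining these two ingredients finishes the proof. If $G_{p,q}$ is regular in the Sobolev norm of order $s$, then for $v\in H^{s}_{(m-p,m-1-q)}(M)$,
\begin{equation*}
\|G_{m-p,m-1-q}v\|_{s} \;\leq\; \|A_{p,q}\|_{H^{s}\to H^{s}}\,\|G_{p,q}\|_{H^{s}\to H^{s}}\,\|A_{m-p,m-1-q}\|_{H^{s}\to H^{s}}\,\|v\|_{s},
\end{equation*}
so $G_{m-p,m-1-q}$ is regular in the same Sobolev norm. Similarly, if $v$ is smooth, then $A_{m-p,m-1-q}v$ is smooth, hence $G_{p,q}A_{m-p,m-1-q}v$ is smooth by the assumed global regularity of $G_{p,q}$, and applying $A_{p,q}$ preserves smoothness; thus $G_{m-p,m-1-q}v$ is smooth. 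The converse implication is obtained by exchanging the roles of $(p,q)$ and $(m-p,m-1-q)$.

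I expect no real obstacle here: the substantive work is already encoded in Proposition \ref{Apqproperties}, which removes the zero-order error terms that afflicted the earlier ad hoc Hodge-$\star$-like operators and made them unusable for Sobolev estimates. The only point requiring a moment of care is the verification that $A_{p,q}$ is a zero-order operator with smooth coefficients in the sense above, and that the Sobolev norms defined coefficientwise via local charts are equivalent to norms defined via the local $(1,0)$ frames $\{\omega_{1},\dots,\omega_{m}\}$ used to write $A_{p,q}$.
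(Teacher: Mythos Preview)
Your proposal is correct and follows essentially the same approach as the paper: invoke the intertwining relation \eqref{commutes2} from Proposition \ref{Apqproperties} and observe, via \eqref{Apq-star2}, that $A_{p,q}$ is an order-zero operator with smooth coefficients (hence continuous in Sobolev norms and smoothness-preserving). Your version is somewhat more explicit in that you use \eqref{Id} to rewrite $G_{m-p,m-1-q}$ as a conjugate of $G_{p,q}$ and spell out the resulting norm estimate, but the underlying argument is identical.
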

\begin{proof}
The theorem follows from Proposition \ref{Apqproperties}, \eqref{commutes2}, once one observes that $A_{p,q}$ is continuous not only in $L^{2}$, but also in Sobolev norms. The latter fact follows for example from the expression \eqref{Apq-star2} for $A_{p,q}$ and then writing out $\widetilde{\star}$ in local coordinates.\footnote{Obviously, these arguments work for estimates in many other topologies as well (for example, estimates in $L^{p}$--Sobolev norms for $p\neq 2$, etc; see \cite{Koenig04} for H\"{o}lder and $L^{p}$--estimates).}\end{proof}

As to when Sobolev estimates actually do hold when $M$ is as in Theorem \ref{symmetry}, we refer the reader to \cite{BoasStraube91, StraubeZeytuncu15}, as well as to the recent survey \cite{BiardStraube16} and their references.

\section{Percolation of estimates}\label{est-percolation}

Compactness and subellipticity for the $\overline{\partial}$-Neumann operator percolate up the $\overline{\partial}$-complex: if these estimates hold for $(p,q)$--forms, they also hold for $(p,q+1)$--forms, see for example \cite{Straube10a}, Proposition 4.5, and the references given there for the original sources. As mentioned in the introduction, this property of the $\overline{\partial}$-Neumann operator fails for the complex Green operator. Note that taking into account that compactness of the complex Green operator holds (or fails) simultaneously at bidegrees $(p,q)$ and $(p,m-1-q)$ (\cite{Kohn81}, Proposition on page 255, \cite{Koenig04}, page 289, \cite{BiardStraube16}, Lemma 8), a moment's reflection reveals that if compactness were to percolate up the $\overline{\partial}_{M}$--complex, then compactness at \emph{some} level $(p,q)$ would imply compactness at \emph{all} levels $(p, r)$, $1\leq r\leq (m-2)$. This is too good to be true. On the boundary of a smooth bounded convex domain in $\mathbb{C}^{n}$, $G_{0,q}$ is compact if and only if this boundary does not contain complex varieties of dimension $q$ nor of dimension $(n-1-q)$ (\cite{RaichStraube08}, Theorem 1.5). Therefore, if $n\geq 5$, and if the boundary contains an analytic disc, but no higher dimensional complex varieties, then $G_{0,2}, \hdots, G_{0,n-3}$ are compact, while $G_{0,1}$ and $G_{0,n-2}$ are not (and indeed compactness of $G_{0,n-3}$ does not percolate up to $G_{0,n-2}$).

\smallskip

The above characterization of compactness of $G_{0,q}$ on the boundary of a convex domain implies that if $G_{0,q_{1}}$ and $G_{0,q_{2}}$, $q_{1}\leq q_{2}$, are compact, then so is $G_{0,q}$ for $q_{1}\leq q\leq q_{2}$; we refer to this phenomenon as interpolation between bidegrees. This interpolation phenomenon turns out to be true in general. While this is perhaps not surprising, there is more than meets the eye. If $G_{p,q}u = \mathcal{P}^{+}G_{p,q}u + \mathcal{P}^{0}G_{p,q}u + \mathcal{P}^{-}G_{p,q}u$ denotes the usual microlocal split of the form $G_{p,q}u$ (\cite{Kohn85, Kohn02}), then $G_{p,q}$ is compact if and only if $\mathcal{P}^{j}G_{p,q}$ is for $j\in \{+,-,0\}$ (since the $\mathcal{P}^{j}$ are bounded operators on $L^{2}_{(p,q)}(M)$). $\mathcal{P}^{0}G_{p,q}$ is always compact, because of elliptic estimates for $\overline{\partial}_{M}\oplus\overline{\partial}_{M}^{*}$ on that part of the microlocalization, so only $\mathcal{P}^{+}G_{p,q}$ and $\mathcal{P}^{-}G_{p,q}$ are relevant for the question of compactness of $G_{p,q}$. It turns out that compactness for both
$\mathcal{P}^{+}G_{p,q}$ and $\mathcal{P}^{-}G_{p,q}$ does percolate. However, while for $\mathcal{P}^{+}G_{p,q}$, percolation is indeed up the $\overline{\partial}_{M}$--complex, for 
$\mathcal{P}^{-}G_{p,q}$ it is \emph{down} the complex. Of course, interpolation is an immediate corollary: if $G_{p,q}$ is compact at two levels $(p,q_{1})$ and $(p,q_{2})$, $q_{1}\leq q_{2}$, then both $\mathcal{P}^{+}G_{p,q_{j}}$ and $\mathcal{P}^{-}G_{p,q_{j}}$ are compact, $j=1,2$, and percolation (up from $\mathcal{P}^{+}G_{p,q_{1}}$, down from $\mathcal{P}^{-}G_{p,q_{2}}$) implies that at the intermediate form levels $(p,r)$, $q_{1}\leq r\leq q_{2}$, both $\mathcal{P}^{+}G_{p,r}$ and $\mathcal{P}^{-}G_{p,r}$ are compact. Hence so is $G_{p,r}$.

\smallskip

To make these ideas precise, we follow \cite{MunasingheStraube12} in setting up the microlocalizations from \cite{Kohn85, Kohn02}. We first work with forms supported in a fixed open set $U\subset\subset U^{\prime}$ small enough so that the following makes sense. Choose coordinates on $M$ in $U^{\prime}$ of the form $(x_1,\dots, x_{2m-2}, t)$ such that $T=(-i)\frac{\partial}{\partial t}$. Denote the `dual' coordinates in $\mathbb{R}^{2m-1}$ by $(\xi_1,\dots,\xi_{2m-2}, \tau)=(\xi,\tau)$. Next, choose $\chi \in C_0^\infty(U^{\prime})$ with $\chi\equiv 1$ in a neighborhood of $\overline{U}$. On the unit sphere $\lbrace{\Vert \xi\Vert^2+ \tau^2=1}\rbrace$, choose a smooth function $g$, $0\leq g\leq 1$, supported in $\lbrace{\tau>\frac{\Vert \xi\Vert}{2}}\rbrace$, $g\equiv1$ on $\lbrace{\tau\geq \frac{3\Vert \xi\Vert}{4}}\rbrace$.
For $\vert(\xi,\tau)\vert\geq \frac{3}{4}$, set $\chi^+(\xi,\tau)=g(\frac{(\xi,\tau)}{\vert(\xi,\tau)\vert})$ and extend it smoothly to $\vert(\xi,\tau)\vert< \frac{3}{4}$ such that $\chi^+(\xi,\tau)= 0$ on $\lbrace{\vert(\xi,\tau)\vert\leq \frac{1}{2}}\rbrace$. Then, define $\chi^-$ and $\chi^0$ by $\chi^-(\xi,\tau)=\chi^+(-\xi,-\tau)$ and $\chi^0=1-\chi^+ -\chi^-$. 
Finally, denote the Fourier transform on $\mathbb{R}^{2m-1}$ by $\mathcal{F}$. For a $(p,q)$--form $u$,  we set
\begin{equation}\label{microdef}
\mathcal{P}^+u=\chi\mathcal{F}^{-1}\chi^+ \hat{u}\;,\quad \mathcal{P}^-u=\chi\mathcal{F}^{-1}\chi^- \hat{u}\;,\quad \mathcal{P}^0u=\chi\mathcal{F}^{-1}\chi^0 \hat{u}\;,
\end{equation}
where $\hat{u}=\mathcal{F}u$, and the operators act coefficientwise with respect to a fixed 
(chosen) frame $\{\omega_{1}, \hdots, \omega_{m}\}$.\footnote{We assume that the open set $U^{\prime}$ is contained in a special boundary chart.} Then $\mathcal{P}^\pm$ and $\mathcal{P}^0$ also act coefficientwise, as pseudo-differential operators of order zero. Note that $\mathcal{P}^{+}u + \mathcal{P}^{-}u + \mathcal{P}^{0}u = u$.

Cover $M$ with finitely many open sets $U_{j}\subset\subset U_{j}^{\prime}$ as above, $1\leq j\leq l$,  and choose a partition of unity $\{\varphi_{j}\}_{j=1}^{l}$ subordinate to this cover. Then for each $j$, we have the operators $\mathcal{P}^{k}_{j}$, $k=+,-,0$ from the previous paragraph. We set
\begin{equation}\label{micro}
\mathcal{P}^{k}u := \sum_{j=1}^{l}\mathcal{P}^{k}_{j}\varphi_{j}u \;,\;u\in L^{2}_{(p,q)}(M)\;,\;k=+,-,0\;.                                                                                                                                                                                                                                                                                      
                                                                                                                                                                                                                                                                             \end{equation}

Recall that saying that $G_{p,q}$ is compact is the same as saying that the imbedding $j_{p,q}$ of $dom(\overline{\partial}_{M})\cap dom(\overline{\partial}_{M}^{*})\cap 
\mathcal{H}_{(p,q)}(M)^{\perp}$, with the graph norm, into $L^{2}_{(p,q)}(M)$ is compact (\cite{BiardStraube16}, Lemma 6)\footnote{The proof there is for $(0,q)$--forms, but it works equally well for $(p,q)$--forms.}. Compactness of the microlocalizations of the complex Green operators can similarly be expressed in terms of the microlocalizations of $j_{p,q}$. In turn, the latter is equivalent to (a family of) compactness estimates. In the following lemma, $dom(\overline{\partial}_{M})\cap dom(\overline{\partial}_{M}^{*})\cap \mathcal{H}_{(p,q}(M)^{\perp}$ is endowed with the graph norm $\|u\|_{graph} = \|\overline{\partial}_{M}u\| + \|\overline{\partial}_{M}^{*}u\|$, as usual.

\begin{lemma}\label{microcompact}
Let $0\leq p\leq m$, $1\leq q\leq (m-2)$, $k\in \{+,-,0\}$. Then the following are equivalent:

(i) $\mathcal{P}^{k}G_{p,q}$ is compact.

(ii) $\mathcal{P}^{k}j_{p,q}: dom(\overline{\partial}_{M})\cap dom(\overline{\partial}_{M}^{*})\cap \mathcal{H}_{(p,q)}(M)^{\perp} \rightarrow L^{2}_{(p,q)}(M)$ is compact.

(iii) For all $\varepsilon>0$, there is a constant $C_{\varepsilon}>0$ such that 
\begin{equation}\label{microcompest}
 \|\mathcal{P}^{k}u\|^{2} \leq \varepsilon\left(\|\overline{\partial}_{M}u\|^{2} + \|\overline{\partial}_{M}^{*}u\|^{2}\right) + C_{\varepsilon}\|u\|_{{-1}}^{2}\;\;, u\in dom(\overline{\partial}_{M})\cap dom(\overline{\partial}_{M}^{*})\cap \mathcal{H}_{(p,q)}(M)^{\perp}\;.
\end{equation}

(iii)* For all $\varepsilon>0$, there is a constant $C_{\varepsilon}$ such that 
\begin{equation}\label{microcompest2}
 \|\mathcal{P}^{k}u\|^{2} \leq \varepsilon\left(\|\overline{\partial}_{M}u\|^{2} + \|\overline{\partial}_{M}^{*}u\|^{2}\right) + C_{\varepsilon}\|u\|_{{-1}}^{2}\;\;, u\in dom(\overline{\partial}_{M})\cap dom(\overline{\partial}_{M}^{*})\;.
\end{equation}
\end{lemma}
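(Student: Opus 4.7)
The plan is to establish the equivalences by a three-stage sequence that mirrors the standard $\overline{\partial}$-Neumann playbook: (i)$\Leftrightarrow$(ii) via a square-root factorization of $G_{p,q}$, (ii)$\Leftrightarrow$(iii) via the functional-analytic characterization of compactness of an imbedding, and (iii)$\Leftrightarrow$(iii)* via an orthogonal decomposition that uses the finite dimensionality of $\mathcal{H}_{(p,q)}(M)$.

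For (i)$\Leftrightarrow$(ii), write $X := dom(\overline{\partial}_M)\cap dom(\overline{\partial}_M^*)\cap\mathcal{H}_{(p,q)}(M)^{\perp}$ with the graph norm. Using \eqref{quadform} and $\Box_{p,q}G_{p,q}=I$ on $\mathcal{H}_{(p,q)}(M)^{\perp}$, I obtain $\|G_{p,q}u\|_{graph}^{2}=(u,G_{p,q}u)\leq C\|u\|^{2}$, so $G_{p,q}:L^{2}_{(p,q)}(M)\to X$ is bounded. Thus $\mathcal{P}^{k}G_{p,q}=(\mathcal{P}^{k}j_{p,q})\circ G_{p,q}$, which gives (ii)$\Rightarrow$(i). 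For the converse I factor through the square root: let $T$ be the positive self-adjoint realization of the quadratic form $Q_{p,q}$, so that $dom(T^{1/2})=dom(\overline{\partial}_M)\cap dom(\overline{\partial}_M^{*})$ and $\|T^{1/2}u\|^{2}=Q_{p,q}(u,u)$. On $\mathcal{H}_{(p,q)}(M)^{\perp}$ the operator $T^{1/2}$ is an isometry from $X$ onto $L^{2}_{(p,q)}(M)$ with inverse $G_{p,q}^{1/2}$, so $\mathcal{P}^{k}j_{p,q}=(\mathcal{P}^{k}G_{p,q}^{1/2})T^{1/2}$. Compactness of $\mathcal{P}^{k}G_{p,q}^{1/2}$ follows from compactness of $(\mathcal{P}^{k}G_{p,q}^{1/2})(\mathcal{P}^{k}G_{p,q}^{1/2})^{*}=\mathcal{P}^{k}G_{p,q}(\mathcal{P}^{k})^{*}$ and the standard fact that $S$ is compact iff $SS^{*}$ is.

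For (ii)$\Leftrightarrow$(iii), I observe that by \eqref{basicL2} the graph norm dominates $\|\cdot\|_{L^{2}}$ on $\mathcal{H}_{(p,q)}(M)^{\perp}$, so the imbedding $X\hookrightarrow L^{2}_{(p,q)}(M)\hookrightarrow H^{-1}$ is compact by Rellich. Given this auxiliary compact imbedding, a bounded operator $\mathcal{P}^{k}j_{p,q}:X\to L^{2}_{(p,q)}(M)$ is compact iff it admits an $\varepsilon/C_{\varepsilon}$ estimate with the $H^{-1}$-norm on the right; the forward direction is the standard contradiction argument (extract a subsequence along which \eqref{microcompest} would fail for some fixed $\varepsilon_{0}$, pass to a subsequence converging in $H^{-1}$, and derive a contradiction with compactness), and the reverse direction is immediate since any bounded sequence in $X$ has an $H^{-1}$-convergent subsequence which is then Cauchy in the image by the estimate.

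Finally, (iii)$\Rightarrow$(iii)* is obtained by splitting $u=u_{1}+u_{2}$ with $u_{1}\in\mathcal{H}_{(p,q)}(M)$ and $u_{2}\in\mathcal{H}_{(p,q)}(M)^{\perp}$; since $u_{1}$ is harmonic, $\overline{\partial}_{M}u_{2}=\overline{\partial}_{M}u$ and $\overline{\partial}_{M}^{*}u_{2}=\overline{\partial}_{M}^{*}u$, so \eqref{microcompest} applied to $u_{2}$ gives the desired control on $\|\mathcal{P}^{k}u_{2}\|$. For the harmonic part, $\mathcal{H}_{(p,q)}(M)$ is finite-dimensional and consists of smooth forms (so all Sobolev norms are equivalent on it), and the $L^{2}$-orthogonal projection onto it is bounded in $H^{-1}$; hence $\|\mathcal{P}^{k}u_{1}\|_{L^{2}}\leq\|u_{1}\|_{L^{2}}\leq C\|u_{1}\|_{-1}\leq C'\|u\|_{-1}$, which is absorbed into the $C_{\varepsilon}\|u\|_{-1}^{2}$ term. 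The reverse direction (iii)*$\Rightarrow$(iii) is trivial. The one non-routine step is the square-root factorization in (i)$\Rightarrow$(ii); the rest is standard functional analysis combined with \eqref{basicL2} and finite-dimensionality of the harmonic space.
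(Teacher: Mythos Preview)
Your proof is correct and follows essentially the same strategy as the paper's: the $SS^{*}$ compactness criterion for (i)$\Rightarrow$(ii), the abstract $\varepsilon/C_{\varepsilon}$ characterization of compactness for (ii)$\Leftrightarrow$(iii), and finite dimensionality of $\mathcal{H}_{(p,q)}(M)$ for (iii)$\Leftrightarrow$(iii)*. The only notable difference is packaging: where you introduce $G_{p,q}^{1/2}$ and factor $j_{p,q}=G_{p,q}^{1/2}T^{1/2}$, the paper invokes directly the identity $G_{p,q}=j_{p,q}(j_{p,q})^{*}$ on $\mathcal{H}_{(p,q)}(M)^{\perp}$, so that $\mathcal{P}^{k}G_{p,q}(\mathcal{P}^{k})^{*}=(\mathcal{P}^{k}j_{p,q})(\mathcal{P}^{k}j_{p,q})^{*}$ already has the $SS^{*}$ form without passing through the square root; and for (iii)$\Leftrightarrow$(iii)* the paper simply observes that both estimates are, by the same abstract lemma, compactness statements for $\mathcal{P}^{k}$ on spaces differing by the finite-dimensional $\mathcal{H}_{(p,q)}(M)$, whereas you carry out the orthogonal splitting by hand. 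One small slip: $T^{1/2}$ is an isometry from $X$ onto $\mathcal{H}_{(p,q)}(M)^{\perp}$, not onto all of $L^{2}_{(p,q)}(M)$, but this does not affect your argument.
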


\begin{proof}
That (ii) and (iii) are equivalent follows from a general lemma in functional analysis that characterizes compactness of Hilbert space operators in terms of a family of estimates as in (iii) (\cite{Straube10a}, Lemma 4.3, \cite{BiardStraube16}, Lemma 7) and the fact that $L^{2}(M)$ embeds compactly into $W^{-1}(M)$.

By the same lemma, (iii) and (iii)* say that $\mathcal{P}^{k}$ is compact on $dom(\overline{\partial}_{M})\cap dom(\overline{\partial}_{M}^{*})\cap \mathcal{H}_{(p,q)}(M)^{\perp}$ and $dom(\overline{\partial}_{M})\cap dom(\overline{\partial}_{M}^{*})$, respectively. But because $\mathcal{H}_{(p,q)}(M)$ is finite dimensional (see the discussion at the end of section \ref{pre}), these two statements are equivalent.

Assume now that (iii) holds. To prove (i), it suffices to establish compactness of $\mathcal{P}^{k}G_{p,q}$ on $\mathcal{H}_{(p,q)}(M)^{\perp}$ (since $G_{p,q} \equiv 0$ on $\mathcal{H}_{(p,q)}(M)$). For such $u$, $G_{p,q}u$ is also in $\mathcal{H}_{(p,q)}(M)^{\perp}$, so that we may apply \eqref{microcompest} to $G_{p,q}u$. This gives
\begin{multline}\label{compactness}
 \|\mathcal{P}^{k}G_{p,q}u\|^{2}  \leq \varepsilon\left(\|\overline{\partial}_{M}G_{p,q}u\|^{2}+\|\overline{\partial}_{M}^{*}G_{p,q}u\|^{2}\right) + C_{\varepsilon}\|G_{p,q}u\|_{-1}^{2}\\
 = \varepsilon(\Box_{p,q}G_{p,q}u,u) + C_{\varepsilon}\|G_{p,q}u\|_{-1}^{2}
 = \varepsilon\|u\|^{2} + C_{\varepsilon}\|G_{p,q}u\|_{-1}^{2}\;.
\end{multline}
In the first equality on the second line we have used \eqref{quadform}. As $\varepsilon>0$ is arbitrary, the lemma used in the previous paragraph now shows that $\mathcal{P}^{k}G_{p,q}$ is compact on $\mathcal{H}_{(p,q)}(M)^{\perp}$,
because $u\rightarrow G_{p,q}u$ is a compact operator from $L^{2}_{(p,q)}(M)$ to $W^{-1}_{(p,q)}(M)$.

To see that (i) implies (ii), first note that on $\mathcal{H}_{(p,q)}(M)^{\perp}$, $G_{p,q} = j_{p,q}(j_{p,q})^{*}$ (\cite{BiardStraube16}, Lemma 4). Therefore, $\mathcal{P}^{k}j_{p,q}(j_{p,q})^{*}$ is compact on $\mathcal{H}_{(p,q)}(M)^{\perp}$. For the purposes of (ii), we may view $\mathcal{P}^{k}$ as a continuous operator from $\mathcal{H}_{(p,q)}(M)^{\perp}$ into $L^{2}_{(p,q)}(M)$. Denote by $(\mathcal{P}^{k})^{*}$ the adjoint of this operator.
Then $\mathcal{P}^{k}j_{p,q}(j_{p,q})^{*}(\mathcal{P}^{k})^{*} = (\mathcal{P}^{k}j_{p,q})(\mathcal{P}^{k}j_{p,q})^{*}$ is also compact. But the latter operator is compact (if and) only if $\mathcal{P}^{k}j_{p,q}$ is compact, i.e. (ii) holds.
\end{proof}

\medskip

We are now ready to formulate and prove the main result of this section. We are not considering the Green operators in the exceptional cases $q=0, (m-1)$, whence the restrictions on the range of $q$.
\begin{theorem}\label{percolation}
Let $M$ be a smooth compact pseudoconvex orientable CR-submanifold of $\mathbb{C}^n$ of hypersurface type, of CR-dimension $m-1$, let $0\leq p\leq m$. We have:

(i) if $\mathcal{P}^{+}G_{p,q}$ is compact, then so is $\mathcal{P}^{+}G_{p,q+1}$, $1\leq q\leq (m-3)$.

(ii) if $\mathcal{P}^{-}G_{p,q}$ is compact, then so is $\mathcal{P}^{-}G_{p,q-1}$, $2\leq q \leq (m-2)$. 

\end{theorem}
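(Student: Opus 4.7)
The plan is three-fold: use Lemma~\ref{microcompact} to translate both hypotheses and conclusions into microlocal compactness estimates \eqref{microcompest2}; derive part (ii) from part (i) by symmetry; and establish (i) directly by microlocal analysis on the $+$ cone.

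For the symmetry reduction, note that complex conjugation in the Fourier transform interchanges supports in $\{\tau>0\}$ and $\{\tau<0\}$, and that $\chi^{-}(\xi,\tau)=\chi^{+}(-\xi,-\tau)$ by the very definition in \eqref{microdef}. Since $A_{p,q}$ is built from complex conjugation and the real operator $\widetilde{\star}$ (see \eqref{Apq-star2}), one expects $A_{p,q}\mathcal{P}^{+} \equiv \mathcal{P}^{-}A_{p,q}$ modulo a zero-order pseudodifferential operator whose action is harmless in \eqref{microcompest2}. Combined with the intertwining \eqref{commutes2} and the isometry \eqref{isometry}, this yields that compactness of $\mathcal{P}^{+}G_{p,q}$ is equivalent to compactness of $\mathcal{P}^{-}G_{m-p,m-1-q}$. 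The bidegree substitution $(p',q')=(m-p,m-1-q)$ turns the upward shift $q\to q+1$ of (i) into the downward shift $q'\to q'-1$ of (ii).

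For (i), assume the microlocal estimate at bidegree $(p,q)$,
\[
\|\mathcal{P}^{+}u\|^{2} \leq \varepsilon\bigl(\|\dbarM u\|^{2}+\|\dbarMstar u\|^{2}\bigr) + C_{\varepsilon}\|u\|_{-1}^{2},
\]
and derive the same at $(p,q+1)$ for a smooth form $v$ in the joint domain. The strategy is to combine the $(p,q)$-level microlocal estimate with Kohn's microlocal Kohn--Morrey--H\"ormander identity on the $+$ cone. On this cone $iT$ has positive symbol $\tau$, and by pseudoconvexity the Levi-form contribution in Kohn's identity is nonnegative; crucially the Levi term at bidegree $(p,q+1)$ dominates the one at $(p,q)$, since it involves one additional index of summation over nonnegative eigenvalues. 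Integration by parts, moving the microlocalizing operator $\mathcal{P}^{+}$ past $\dbarM$ and $\dbarMstar$, produces zero-order pseudodifferential commutator terms that can be absorbed using the basic estimate \eqref{basicL2} and the ellipticity of $\dbarM\oplus\dbarMstar$ on the $\mathcal{P}^{0}$-part.

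The main obstacle is making precise the microlocal monotonicity: identifying a positive operator-valued quadratic form that, on the $+$ cone, grows with $q$ and bridges the $(p,q)$ and $(p,q+1)$ compactness estimates, while handling the various zero-order errors from commutators of $\mathcal{P}^{+}$ with the cutoff $\chi$ and with the partition of unity \eqref{micro} so that the right-hand side of \eqref{microcompest2} absorbs them. The disjointness of the conic supports of $\chi^{+}$ and $\chi^{-}$, the ellipticity on $\chi^{0}$, and the basic estimate \eqref{basicL2} are the technical ingredients that allow this bookkeeping and that explain why pseudoconvexity forces $\mathcal{P}^{+}$ percolation to go up and $\mathcal{P}^{-}$ percolation to go down.
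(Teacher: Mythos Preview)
Your symmetry reduction of (ii) to (i) via $A_{p,q}$ is a genuinely different route from the paper's, and it is a nice idea: the paper proves (ii) directly by a computation parallel to (i) but with the roles of $L_{j}$ and $\overline{L_{j}}$ interchanged, which forces an extra integration by parts and brings in the matrix $c_{jk}-\frac{1}{q-1}\delta_{jk}\,\mathrm{tr}(c)$ together with a separate application of G\aa rding. Your observation that conjugation exactly intertwines the multipliers $\chi^{+}$ and $\chi^{-}$, and that in an orthonormal frame $\widetilde{\star}$ acts by signed permutation of coefficients (so commutes with the coefficientwise $\mathcal{P}^{\pm}$), is correct and would shorten the argument; you should still check that the chart-dependent definition in \eqref{micro} does not spoil this beyond order $-1$ errors.

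However, your argument for (i) has a real gap: you never say \emph{how} you apply the $(p,q)$-level estimate to a $(p,q+1)$-form. The paper's mechanism is concrete and essential. Given $u=\sideset{}{'}\sum u_{IJ}\,\omega_{I}\wedge\overline{\omega_{J}}$ of bidegree $(p,q+1)$, one builds $(p,q)$-forms $v_{k}=\sideset{}{'}\sum_{|K|=q} u_{IkK}\,\omega_{I}\wedge\overline{\omega_{K}}$ and uses $\|\mathcal{P}^{+}u\|^{2}=\frac{1}{q+1}\sum_{k}\|\mathcal{P}^{+}v_{k}\|^{2}$. One then feeds each $v_{k}$ (actually $\mathcal{P}^{+}v_{k}$) into the assumed $(p,q)$-estimate. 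Your phrase ``the Levi term at bidegree $(p,q+1)$ dominates the one at $(p,q)$'' suggests a different, vaguer picture that does not by itself produce the inequality; the paper never compares Levi terms at two levels, it simply inserts the auxiliary forms $v_{k}$ and then uses pseudoconvexity plus G\aa rding to show the Levi term in Kohn's identity is bounded below by $-\|u\|^{2}$.

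There is a second subtlety you have not addressed. After inserting $v_{k}$ into the $(p,q)$-estimate one must control $\|\overline{\partial}_{M}v_{k}\|$ and $\|\overline{\partial}_{M}^{*}v_{k}\|$. The adjoint term is harmless (it is controlled by $\|\overline{\partial}_{M}^{*}u\|+\|u\|$), but $\overline{\partial}_{M}v_{k}$ is \emph{not} controlled by $\overline{\partial}_{M}u$; this is exactly where the $\overline{\partial}_{M}$-complex differs from the $\overline{\partial}$-Neumann problem. The paper's workaround is to estimate $\|(\mathcal{P}^{+})^{2}v_{k}\|$ instead of $\|\mathcal{P}^{+}v_{k}\|$ (the two differ by terms living on a cone where $\overline{\partial}_{M}\oplus\overline{\partial}_{M}^{*}$ is elliptic), so that one only needs to bound $\|\overline{\partial}_{M}(\mathcal{P}^{+}v_{k})\|$. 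The latter reduces to $\|\overline{L_{j}}\mathcal{P}^{+}u_{IJ}\|$, and Kohn's identity plus G\aa rding on the $+$ cone bounds these by $\|\overline{\partial}_{M}u\|+\|\overline{\partial}_{M}^{*}u\|+\|u\|$. Without this $(\mathcal{P}^{+})^{2}$ trick and the $v_{k}$ construction, your outline does not close.
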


Because $\Box_{p,q}$ is (microlocally) elliptic on the support of $\chi^{0}$,
$\mathcal{P}^{0}G_{p,q}$ gains two derivatives, and so is in particular compact on $L^{2}_{(p,q)}(M)$. Theorem \ref{percolation} therefore immediately implies `interpolation', as follows.
\begin{corollary}\label{interpolation}
 Let $M$ be as in Theorem \ref{percolation}, let $1\leq q_{1}\leq q_{2}\leq (m-2)$. If $G_{p,q_{1}}$ and $G_{p,q_{2}}$ are compact, then so is $G_{p,r}$ for $q_{1}\leq r\leq q_{2}$.
\end{corollary}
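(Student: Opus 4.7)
The plan is to decompose each $G_{p,r}$ into its three microlocal pieces and assemble compactness from percolation of the positive and negative pieces together with automatic compactness of the zero piece. Concretely, for any $r$ with $1 \leq r \leq (m-2)$,
\begin{equation*}
G_{p,r} = \mathcal{P}^{+}G_{p,r} + \mathcal{P}^{0}G_{p,r} + \mathcal{P}^{-}G_{p,r},
\end{equation*}
so $G_{p,r}$ is compact on $L^{2}_{(p,r)}(M)$ provided each of the three summands is. The middle summand is compact for free: on the support of $\chi^{0}$ the operator $\Box_{p,r}$ is (microlocally) elliptic, so $\mathcal{P}^{0}G_{p,r}$ gains two derivatives and is compact by Rellich.

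For the positive part, I would start from the assumption that $G_{p,q_{1}}$ is compact. Via Lemma \ref{microcompact}, compactness of $G_{p,q_{1}}$ implies compactness of $\mathcal{P}^{+}G_{p,q_{1}}$. Then I would invoke Theorem \ref{percolation}(i) iteratively $r-q_{1}$ times (each step allowed because $r \leq q_{2} \leq m-2$, so every intermediate index sits in the range $1\leq q \leq m-3$ required by the theorem) to conclude that $\mathcal{P}^{+}G_{p,r}$ is compact for every $r$ with $q_{1} \leq r \leq q_{2}$.

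Symmetrically, from compactness of $G_{p,q_{2}}$ I extract compactness of $\mathcal{P}^{-}G_{p,q_{2}}$ and apply Theorem \ref{percolation}(ii) iteratively $q_{2}-r$ times (each intermediate index lies in $2 \leq q \leq m-2$, so the hypothesis of (ii) is met at every step) to obtain compactness of $\mathcal{P}^{-}G_{p,r}$ for every $r$ with $q_{1} \leq r \leq q_{2}$. Combining the three pieces yields compactness of $G_{p,r}$ for $q_{1} \leq r \leq q_{2}$.

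There is no serious obstacle here: the only thing to verify is that the iterated percolation respects the index ranges stated in Theorem \ref{percolation}, and the bookkeeping above confirms this. In particular, because the \emph{upward} percolation is started at $q_{1}$ and the \emph{downward} one at $q_{2}$, we never need Theorem \ref{percolation} at the excluded endpoints $q=0$ or $q=m-1$, where the complex Green operator has not even been defined in this paper.
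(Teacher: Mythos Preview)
Your proof is correct and follows essentially the same approach as the paper: decompose $G_{p,r}$ into its three microlocal pieces, use ellipticity for $\mathcal{P}^{0}G_{p,r}$, and use Theorem~\ref{percolation}(i) upward from $q_{1}$ and (ii) downward from $q_{2}$ for the $\mathcal{P}^{\pm}$ pieces, with the index-range bookkeeping exactly as you describe. The only triviality worth noting is that you do not really need Lemma~\ref{microcompact} to pass from compactness of $G_{p,q_{j}}$ to compactness of $\mathcal{P}^{\pm}G_{p,q_{j}}$: since $\mathcal{P}^{\pm}$ is bounded on $L^{2}$, the composition of a bounded operator with a compact one is compact.
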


\begin{proof}[Proof of Theorem \ref{percolation}]
We begin with (i). In view of Lemma \ref{microcompact}, what we have to show is that \eqref{microcompest2} for $(p,q)$--forms and $k=+$ implies \eqref{microcompest2} for $(p,q+1)$--forms and $k=+$. In doing so, we only have to establish \eqref{microcompest2} for smooth $(p,q+1)$--forms $u$, as they are dense in $dom(\overline{\partial}_{M}) \cap dom (\overline{\partial}_{M}^{*})$, by the Friedrichs Lemma; compare the remark at the beginning of the proof of Proposition \ref{Apqproperties}.

Via a standard partition of unity argument, it suffices to establish \eqref{microcompest2} for $(p,q+1)$--forms with support in a special boundary chart: 
$u=\sideset{}{'}\sum_{\vert I\vert=p,\vert J\vert=q+1}u_{IJ} \omega_{I}\wedge\overline{\omega_J}$. The first step is identical to the one in the proof that compactness in the $\overline{\partial}$-Neumann problem percolates up the $\overline{\partial}$-complex (compare \cite{Straube10a}, proof of Proposition 4.5, and the references there). For $1\leq k\leq (m-1)$, we build $(p,q)$-forms $v_k$ from $u$ as follows
\begin{equation}\label{v_k}
v_k=\sideset{}{'}\sum_{\vert I\vert=p,\vert K\vert=q}u_{IkK} \omega_{I}\wedge\overline{\omega_K}, 
\end{equation} where $(kK)=(k, k_1,\dots, k_q)$.
Since $\mathcal{P}^+$ acts coefficient wise, we obtain
$$\mathcal{P}^+v_k=\sideset{}{'}\sum_{|I|=p,\vert K\vert=q}\mathcal{P}^+u_{IkK}\omega_{I}\wedge\overline{\omega_K}\;.$$
Observe that 
\begin{eqnarray}\label{P+norm}
\Vert \mathcal{P}^+ u\Vert^2= \dfrac{1}{q+1} \sum_{k=1}^{m-1}\Vert \mathcal{P}^+ v_k\Vert^2, 
\end{eqnarray}
where $\dfrac{1}{q+1}$ comes from the fact that each $J$ appears $(q+1)$-times when the tuples $kK$ are put into increasing order.

\eqref{P+norm} together with the assumption (i.e. \eqref{microcompest} for $(p,q)$--forms) suggests to estimate $\overline{\partial}_{M}v_{k}$ and $\bar\partial_M^* v_k$ in terms of quantities involving $\overline{\partial}_{M}u$, $\overline{\partial}_{M}^{*}u$, and $u$. However, in contrast to the situation with the $\overline{\partial}$-complex (see \cite{Straube10a}, proof of Proposition 4.5), this strategy does not work here. The reason is that while $\overline{\partial}_{M}^{*}v_{k}$ is easily related to $\overline{\partial}_{M}^{*}u$, the same is not true for $\overline{\partial}_{M}v_{k}$ and $\overline{\partial}_{M}u$ (see below). 

In order to address this difficulty, we first notice that $\mathcal{P}^{+}$ is essentially a projection, i.e. $\|(\mathcal{P}^{+})^{2}v_{k} - \mathcal{P}^{+}v_{k}\| \lesssim \|(\mathcal{P}^{+})^{2}u - \mathcal{P}^{+}u\|$ (because $\mathcal{P}^{+}$ acts coefficientwise) is under control. The reason is that $(\chi^{+})^{2} - \chi^{+}$ is supported on a cone that stays away from the $\tau$--axis, so that one can invoke ellipticity. More precisely, \eqref{micro} gives for $(\mathcal{P}^{+})^{2}u$
\begin{multline}\label{P+2}
(\mathcal{P}^{+})^{2}u = \sum_{j,s=1}^{l}\mathcal{P}^{+}_{j}\varphi_{j}\mathcal{P}_{s}^{+}\varphi_{s}u = \sum_{j,s=1}^{l}\chi_{j}\mathcal{F}^{-1}\chi^{+}\mathcal{F}\varphi_{j}\chi_{s}\mathcal{F}^{-1}\chi^{+}\mathcal{F}\varphi_{s}u \\
=\sum_{j=1}^{l}\chi_{j}\mathcal{F}^{-1}(\chi^{+})^{2}\mathcal{F}\varphi_{j}u + \sum_{j,s=1}^{l}\chi_{j}\mathcal{F}^{-1}\chi^{+}\mathcal{F}\left[\varphi_{j}\chi_{s},\mathcal{F}^{-1}\chi^{+}\mathcal{F}\right]\varphi_{s}u \;.
\end{multline}
To obtain the third equality, we have first commuted (the multiplication operators) $\varphi_{j}\chi_{s}$ and $\mathcal{F}^{-1}\chi^{+}\mathcal{F}$, then
used that $\varphi_{j}\chi_{s}\varphi_{s}v_{k}=\varphi_{j}\varphi_{s}v_{k}$, and finally that $\sum_{s=1}^{l}\varphi_{s}v_{k}=v_{k}$.\footnote{Some care is required here. The multiplication operator $\varphi_{j}\chi_{s}$ really means multiplication on $\mathbb{R}^{2m-1}$ by the push forward of $\chi_{s}$ under the coordinate map in patch number $s$, followed by pulling back to $M$, multiplying by $\varphi_{j}$ and then pushing forward to $\mathbb{R}^{2m-1}$ under the coordinate map in patch number $j$.} Note that the commutator on the right-hand side of \eqref{P+2} commutes two operators of order zero, so is of order $-1$. Its $L^{2}$--norm is therefore dominated by $\|v_{k}\|_{-1}$, hence by $\|u\|_{-1}$ (and so is benign; only the calculus for the basic symbol classes denoted by $S^{m}$ in \cite{Stein93}, Chapter VI is needed here). \eqref{P+2} now gives
\begin{equation}\label{difference}
 (\mathcal{P}^{+})^{2}u - \mathcal{P}^{+}u = \sum_{j=1}^{l}\chi_{j}\mathcal{F}^{-1}\left((\chi^{+})^{2} - \chi^{+}\right)\mathcal{F}\varphi_{j}u + \mathcal{O}(\|u\|_{-1}) \;.
\end{equation}
Using now that $(\chi^{+})^{2} - \chi^{+}$ is supported on a cone away from the $\tau$--axis, we can invoke microlocal ellipticity of $\overline{\partial}_{M}\oplus\overline{\partial}_{M}^{*}$ on this cone (\cite{Kohn85}, estimate (2.9), \cite{Raich10}, Lemma 4.10, \cite{Nicoara06}, Lemma 4.18):
\begin{multline}\label{ellipticest}
 \|(\mathcal{P}^{+})^{2}u - \mathcal{P}^{+}u\|_{1}^{2} \lesssim \|\overline{\partial}_{M}\left((\mathcal{P}^{+})^{2} - \mathcal{P}^{+}\right)u\|^{2} + \|\overline{\partial}_{M}^{*}\left((\mathcal{P}^{+})^{2} - \mathcal{P}^{+}\right)u\|^{2} \\
 \lesssim \|\overline{\partial}_{M}u\|^{2} + \|\overline{\partial}_{M}^{*}u\|^{2} + \|u\|^{2}
 \lesssim \|\overline{\partial}_{M}u\|^{2} + \|\overline{\partial}_{M}^{*}u\|^{2} + \|u\|_{-1}^{2} \;.
\end{multline}
Here, we have commuted $\overline{\partial}_{M}$ and $\overline{\partial}_{M}^{*}$ with $\left((\mathcal{P}^{+})^{2} - \mathcal{P}^{+}\right)$ and used that these commutators are operators of order zero (see again \cite{Stein93}, Chapter VI) to obtain the second inequality. The third inequality is from \eqref{basicL2}. Using \eqref{ellipticest} and the standard interpolation inequality for Sobolev norms, we find that for all $\varepsilon >0$, there is a constant $C_{\varepsilon}$ such that (for $1\leq k\leq (m-1)$)
\begin{equation}\label{difference}
\|(\mathcal{P}^{+})^{2}v_{k} - \mathcal{P}^{+}v_{k}\| \lesssim \|(\mathcal{P}^{+})^{2}u - \mathcal{P}^{+}u\| \lesssim \varepsilon\left(\|\overline{\partial}_{M}u\| + \|\overline{\partial}_{M}^{*}u\|\right) + C_{\varepsilon}\|u\|_{-1}   \;. 
\end{equation}

\eqref{P+norm} and \eqref{difference} show that in order to obtain to desired estimate for $\mathcal{P}^{+}u$, it suffices to estimate $\|(\mathcal{P}^{+})^{2}v_{k}\|$, $1\leq k\leq (m-1)$. This fact will let us work around the difficulty mentioned above.

Inserting the $(p,q)$--form $(\mathcal{P}^{+})^{2}v_{k}$ into \eqref{microcompest2} gives
\begin{multline}\label{5}
\|(\mathcal{P}^{+})^{2}v_{k}\|^{2} \leq \varepsilon\left(\|\overline{\partial}_{M}(\mathcal{P}^{+}v_{k})\|^{2} + \|\overline{\partial}_{M}^{*}(\mathcal{P}^{+}v_{k})\|^{2}\right) + C_{\varepsilon}\|\mathcal{P}^{+}v_{k}\|_{-1}^{2} \\
\leq \varepsilon\left(\|\overline{\partial}_{M}(\mathcal{P}^{+}v_{k})\|^{2} + \|\mathcal{P}^{+}\overline{\partial}_{M}^{*}v_{k}\|^{2} + \|v_{k}\|^{2}\right) + C_{\varepsilon}\|\mathcal{P}^{+}v_{k}\|_{-1}^{2} \;.
\end{multline}
In the second inequality, we have commuted $\mathcal{P}^{+}$ with $\overline{\partial}_{M}^{*}$, and used that the commutator is an operator of order zero (note that $\mathcal{P}^{+}$ is scalar). 

We first look at $\overline{\partial}_{M}^{*}v_{k}$. This part again follows \cite{Straube10a}, p.79--80. For $\alpha=\sideset{}{'}\sum_{|I|=p,\vert K\vert =q} a_{IK} \omega_{I}\wedge\overline{\omega_K} \in L^{2}_{(p,q)}(M)$, we have
\begin{multline}\label{1}\
(\overline{\omega_k}\wedge \alpha, u)_{L^2_{(p,q+1)}(M)} =\left(\sideset{}{'}\sum_{|I|=p,\vert K\vert =q} a_{IK} \overline{\omega_k}\wedge\omega_{I}\wedge\overline{\omega_K}, \sideset{}{'}\sum_{|I|=p,\vert J\vert=q+1}u_{IJ}\omega_{I}\wedge\overline{\omega_J}\right)_{L^2_{(p,q+1)}(M)} \\= (-1)^{p}\sideset{}{'}\sum_{|I|=p,\vert K\vert=q}\int_{M} a_{IK}\overline{u_{IkK}} \; dV_M \;=\;
(-1)^{p}(\alpha, v_k)_{L^2_{(p,q)}(M)}.
\end{multline}
\eqref{1} expresses an inner product with $v_{k}$ in terms of an inner product with $u$ (read from right to left). We use this expression to estimate $\|\overline{\partial}_{M}^{*}v_{k}\|$.
Let $\beta$ be a $(p,q-1)$-form; we have
\begin{multline}\label{2}
(\bar\partial_M\beta, v_k)_{L^2_{(p,q)}(M)} = (\overline{\omega_k}\wedge\bar\partial_M\beta, u)_{L^2_{(p,q+1)}(M)} \\
= -(\bar\partial_M(\overline{\omega_k}\wedge \beta), u)_{L^2_{(p,q+1)}(M)}+ (\bar\partial_M \overline{\omega_k}\wedge \beta, u)_{L^2_{(p,q+1)}(M)},\\
= -(\overline{\omega_k}\wedge \beta, \bar\partial^*_M u)_{L^2_{(p,q)}(M)}+ (\bar\partial_M \overline{\omega_k}\wedge \beta, u)_{L^2_{(p,q+1)}(M)}\;.
\end{multline}
\eqref{2} shows that $v_{k} \in dom(\overline{\partial}_{M}^{*})$ (since it shows in particular that $\left|(\bar\partial_M\beta, v_k)_{L^2_{(p,q)}(M)}\right| \lesssim \|\beta\|$), and that moreover (since $\mathcal{P}^{+}$ is continuous on $L^{2}$)
\begin{equation}\label{adjv}
\|\mathcal{P}^{+}\overline{\partial}_{M}^{*}v_{k}\|^{2} \lesssim \Vert \bar\partial^*_M v_k\Vert^2 \lesssim \Vert \bar\partial_M^* u\Vert^2 + \Vert u\Vert^2\;;\; 1\leq k\leq (m-1)\;. 
\end{equation}
The reader should note that so far, the passage to $(\mathcal{P}^{+})^{2}v_{k}$ would indeed not have been necessary, as we have estimated $\overline{\partial}_{M}^{*}v_{k}$; having $\mathcal{P}^{+}$ in front of $\overline{\partial}_{M}^{*}$ was not necessary. It is, however, for estimating the term $\Vert \bar\partial_M(P^+ v_k)\Vert$ in \eqref{5}.

We now estimate this term. From
\begin{equation}\label{6}
\bar\partial_M(\mathcal{P}^{+}v_k)=\bar\partial_M\left(\sideset{}{'}\sum_{|I|=p,\vert K\vert=q} (\mathcal{P}^{+}u_{IkK})\,\omega_{I}\wedge\overline{\omega_K}\right) 
\end{equation}
and \eqref{localexpress}, we deduce
\begin{equation}\label{dbarPvk}
\Vert \bar\partial_M (\mathcal{P}^+v_k)\Vert^2\lesssim \sum_{j=1}^{m-1}\sideset{}{'}\sum_{|I|=p,\vert J\vert=q+1} \Vert \overline{L_j}\mathcal{P}^+(u_{IJ})\Vert^2+\Vert \mathcal{P}^+u\Vert^2.
\end{equation}
Of course, in the right-hand side of \eqref{dbarPvk}, the summation should be over a set of patches where local bases $L_{1}, \cdots, L_{m-1}$ are defined. However, that presents no problem; in fact, in view of \eqref{micro}, it suffices to estimate the right-hand side of \eqref{dbarPvk} with $\mathcal{P}^{+}$ replaced by $\mathcal{P}^{+}_{s}$, $1\leq s\leq l$. We can then assume that the open set $U^{\prime}_{s}$ is small enough so that we have such a basis. It turns out that the $\overline{L_{j}}$--derivatives of $\mathcal{P}^{+}_{s}u$ on the right-hand side of \eqref{dbarPvk} can be estimated by 
$\|\overline{\partial}_{M}(\mathcal{P}^{+}_{s}u)\| + \|\overline{\partial}_{M}^{*}(\mathcal{P}^{+}_{s}u)\|$ (plus the benign term $\|\mathcal{P}^{+}_{s}u\|$); this is the crux of the matter.

To obtain this estimate, as well as the corresponding one in the proof of (ii), we follow ideas and computations from \cite{Shaw85a, Kohn85, Nicoara06, Ahn07, RaichStraube08, Raich10, HarringtonRaich10, MunasingheStraube12}.
We start from the usual formula, obtained from integration by parts (see for example the proof of Theorem 8.3.5 in \cite{ChenShaw01}): 
\begin{multline}\label{usualformula}
\Vert \bar\partial_M(\mathcal{P}^{+}_{s}u)\Vert^2+\Vert \bar\partial_M^*(\mathcal{P}^{+}_{s}u)\Vert^2 \\  
= \sum_{j=1}^{m-1}\sideset{}{'}\sum_{|I|=p,\vert J\vert=q+1}\Vert \overline{L_j}(\mathcal{P}^{+}_{s}u_{IJ})\Vert^2
+ \sideset{}{'}\sum_{|I|=p,\vert K\vert=q}\sum_{j,k=1}^{m-1}\left([L_j,\overline{L_k}]\mathcal{P}^{+}_{s} u_{IjK}, \mathcal{P}^{+}_{s}u_{IkK}\right)_{L^2_{(p,q+1)}(M)} \\
 + \mathcal{O}\left(\| \mathcal{P}^{+}_{s}u\|\left(\| \bar{L}(\mathcal{P}^{+}_{s}u)\| +\| L(\mathcal{P}^{+}_{s}u)\|\right)+\|\mathcal{P}^{+}_{s}u\|^2\right)\; ,
\end{multline}
where $\displaystyle{\Vert \overline{L}(\mathcal{P}^{+}_{s}u)\Vert^2=\sum_{j=1}^{m-1}\sideset{}{'}\sum_{|I|=p,\vert J\vert = q+1}\| \overline{L_j}(\mathcal{P}^{+}_{s}u_{IJ})\|^2}$. 

The term $\mathcal{O}\left(\Vert\mathcal{P}^{+}_{s}u\Vert\Vert {L}(\mathcal{P}^{+}_{s}u)\Vert\right)$ is of the form $\left(g\mathcal{P}^{+}_{s}u, L(\mathcal{P}^{+}_{s})u\right)_{L^2(M)}$ where $g$ is a smooth function. With integration by parts, it can be estimated by $\mathcal{O}\left(\Vert\mathcal{P}^{+}_{s}u\Vert \Vert \overline{L}(\mathcal{P}^{+}_{s}u)\Vert +\Vert\mathcal{P}^{+}_{s}u\Vert^2\right)$. As a result, the last term on the right-hand side of \eqref{usualformula} is $\mathcal{O}\left(\Vert\mathcal{P}^{+}_{s}u\Vert \Vert \overline{L}(\mathcal{P}^{+}_{s}u)\Vert +\Vert\mathcal{P}^{+}_{s}u\Vert^2\right)$.

Denote $(c_{jk})$ the matrix of the Levi form in the basis $L_1, \dots, L_{m-1}$. Then 
\begin{equation}
[L_j,\overline{L_k}]=c_{jk} T\quad\text{mod}\; T^{1,0}M\oplus T^{0,1}M\;.
\end{equation}
Plugging this expression for the commutators into \eqref{usualformula}, taking real parts, and using the observation from the preceding paragraph gives 
\begin{multline}\label{Re}
\Vert \bar\partial_M (\mathcal{P}^{+}_{s}u)\Vert^2+\Vert \bar\partial_M^* (\mathcal{P}^{+}_{s}u)\Vert^2 \\
= \sum_{j=1}^{m-1}\sideset{}{'}\sum_{|I|=p,\vert J\vert=q+1}\Vert \overline{L_j} (\mathcal{P}^{+}_{s}u_{IJ})\Vert^2 
+ \text{Re}\left( \sideset{}{'}\sum_{|I|=p,\vert K\vert=q}\sum_{j,k=1}^{m-1} (c_{jk}T (\mathcal{P}^{+}_{s}u_{IjK}),\mathcal{P}^{+}_{s} u_{IkK})\right) \\
+ \mathcal{O}\left(\Vert \mathcal{P}^{+}_{s}u\Vert \Vert \bar{L}\mathcal{P}^{+}_{s}u\Vert + \Vert \mathcal{P}^{+}_{s}u\Vert^2\right)\;.
\end{multline}
Estimating the contribution $\|\mathcal{P}^{+}_{s}u\|\|\overline{L}\mathcal{P}^{+}_{s}u\|$ to the error term by $(l.c.)\|\mathcal{P}^{+}_{s}u\|^{2} + (s.c)\|\overline{L}\mathcal{P}^{+}_{s}u\|^{2}$ and absorbing $(s.c)\|\overline{L}\mathcal{P}^{+}_{s}u\|^{2}$ into the first sum on the second line of \eqref{Re} shows that in the next estimate, we only need to have an error term $\mathcal{O}(\|\mathcal{P}^{+}_{s}u\|^{2})\lesssim\mathcal{O}(\|u\|^{2})$. Moreover, $T\mathcal{P}^{+}_{s}u_{IjK} = T\chi_{s}\mathcal{F}^{-1}\chi^{+}\mathcal{F}u_{IjK} = \chi_{s}T\mathcal{F}^{-1}\chi^{+}\mathcal{F}u_{IjK}+ \mathcal{O}(\|u\|) = \chi_{s}\mathcal{F}^{-1}\tau^{+}\chi^{+}\mathcal{F}u_{IjK}+ \mathcal{O}(\|u\|) $. We have used here that $[T,\chi_{s}]$ is of order zero, $T\mathcal{F}^{-1}=\mathcal{F}^{-1}\tau$, and that on the support of $\chi^{+}$, $\tau = \tau^{+}$. Inserting these observations into \eqref{Re}, we obtain
\begin{multline}\label{Re2}
\Vert\bar\partial_M(\mathcal{P}^{+}_{s}u)\Vert^2+\Vert \bar\partial_M^*(\mathcal{P}^{+}_{s}u)\Vert^2   \gtrsim \sum_{j=1}^{m-1}\sideset{}{'}\sum_{|I|=p,\vert J\vert=q+1}\Vert \overline{L_j}(\mathcal{P}^{+}_{s}u_{IJ})\Vert^2 \\
+  \text{Re}\left(\sideset{}{'}\sum_{|I|=p,\vert K\vert=q}\sum_{j,k=1}^{m-1} (c_{jk}\chi_{s}\mathcal{F}^{-1}\tau^+\chi^+ \mathcal{F}u_{IjK}, \chi_{s}\mathcal{F}^{-1}\chi^+\mathcal{F}u_{IkK})\right)
 + \mathcal{O}(\Vert u\Vert^2)\;.
\end{multline}
$M$ is pseudoconvex, that is, the matrix $(c_{jk})$ is positive semi definite. Therefore,
we can now apply G\aa rding's inequality\footnote{Alternatively, we can write $\tau^{+}=(\sqrt{\tau^{+}})(\sqrt{\tau^{+}})$. Now commute one factor $\sqrt{\tau^{+}}$ all the way to the left of $c_{jk}$, then move it over to the right-hand side in the inner product and commute it past $\mathcal{F}^{-1}$; the resulting error terms are benign. But now the innermost sum is of the form $\sum_{j,k=1}^{m-1}c_{jk}b_{j}\overline{b_{k}}$, and so is manifestly non negative. Our desired estimate follows. Strictly speaking, we use a smooth function that agrees with $\sqrt{\tau^{+}}$ on the support of $\chi^{+}$, so that there are no issues with the pseudodifferential calculus. Compare also \cite{Kohn02}, p.222.} (see for example \cite{Kohn02}, Lemma 2.5, \cite{LaxNirenberg66}, Theorems 3.1, 3.2) to the term with the real part in \eqref{Re2} to obtain 
\begin{equation}\label{Garding}
\text{Re}\left(\sideset{}{'}\sum_{|I|=p,\vert K\vert=q}\sum_{j,k=1}^{m-1} (c_{jk}\chi_{s}\mathcal{F}^{-1}\tau^+\chi^+ \mathcal{F}u_{IjK}, \chi_{s}\mathcal{F}^{-1}\chi^+\mathcal{F}u_{IkK})\right) \gtrsim -\Vert u\Vert^2\;.
\end{equation}
Note that here it is crucial to have $\tau^{+}$ instead of $\tau$ in order to have the positivity required for G\aa rding's inequality. Combining \eqref{Garding} with \eqref{Re2}, we obtain 
\begin{equation}\label{7}
\Vert \overline{L}\mathcal{P}^+u\Vert^2\lesssim \Vert \bar\partial_M u\Vert^2+\Vert \bar\partial_M^* u\Vert^2 + \Vert u\Vert^2\;.
\end{equation}

Finally, combining \eqref{7} with \eqref{dbarPvk}, inserting the result together with \eqref{adjv} into \eqref{5} gives the desired estimate for $\|(\mathcal{P}^{+})^{2}v_{k}\|^{2}$. We have also used that $\|\mathcal{P}^{+}v_{k}\|_{-1} \lesssim \|v_{k}\|_{-1} \lesssim \|u\|_{-1}$ (since $\mathcal{P}^{+}$ is of order zero), and that $\|v_{k}\|^{2} \lesssim \|u\|^{2} \lesssim \|\overline{\partial}_{M}u\|^{2} + \|\overline{\partial}_{M}^{*}u\|^{2} + \|u\|_{-1}^{2}$ (this is a weak special case of \eqref{basicL2}). This completes the proof of part (i) of Theorem \ref{percolation}.
\medskip 

The proof of $(ii)$ of Theorem 2 is similar, but there is an additional twist. Again using Lemma \ref{microcompact}, we assume that \eqref{microcompest2} holds for $(p,q)$--forms, with $k=-$; we must show that it then also holds for $(p,q-1)$--forms. Let $u=\sideset{}{'}\sum_{|I|=p,\vert J\vert=q-1}u_{IJ}\,\omega_{I}\wedge\overline{\omega_{J}}$ be a $(p,q-1)$-form. As in the proof of (i), we may assume that $u$ is smooth. We build a $(p,q)$-form from $u$ as follows:
\begin{equation}\label{down}
 v_k=\sideset{}{'}\sum_{|I|=p,\vert J\vert = q-1} u_{IJ}\,\overline{\omega_{k}}\wedge \omega_{I}\wedge\overline{\omega_J}= \overline{\omega_k}\wedge u\;.
\end{equation}

Then $\displaystyle{\Vert \mathcal{P}^- u\Vert^2\lesssim\sum_{k=1}^{m-1}\Vert \mathcal{P}^- v_k\Vert^2}$, and we have to estimate $\|\mathcal{P}^{-}v_{k}\|$, $1\leq k\leq (m-1)$. The argument that it suffices to estimate $\|(\mathcal{P}^{-})^{2}v_{k}\|$, $1\leq k\leq (m-1)$ instead is the same as in the proof of part (i). Inserting $(\mathcal{P}^{-})^{2}v_{k}$ into \eqref{microcompest2} shows that we need to control $\|\overline{\partial}_{M}(\mathcal{P}^{-}v_{k})\|$ and $\|\overline{\partial}_{M}^{*}(\mathcal{P}^{-}v_{k})\|$ in terms of $\|\overline{\partial}_{M}u\|$, $\|\overline{\partial}_{M}^{*}u\|$, and $\|u\|$. This time, it is the $\overline{\partial}_{M}$--term that is straightforward:
\begin{multline}
\;\;\;\;\;\|\overline{\partial}_{M}\left(\mathcal{P}^{-}v_{k}\right)\| \lesssim \|\mathcal{P}^{-}\overline{\partial}_{M}v_{k}\| + \|v_{k}\| \\
 \lesssim \|\overline{\partial}_{M}\left(\overline{\omega_{k}}\wedge u\right)\| + \|\overline{\omega_{k}}\wedge u\|
 \lesssim \|\overline{\partial}_{M}u\| + \|u\|\;.\;\;\;\;\;
\end{multline}

\eqref{adjoint} gives 
\begin{multline}\label{8}
 \left\|\overline{\partial}_{M}^{*}(\mathcal{P}^{-}v_{k})\right\| \lesssim \sum_{j=1}^{m-1}
 \sideset{}{'}\sum_{|I|=p,|K|=q-1}\|L_{j}(\mathcal{P}^{-}v_{k})_{IjK}\| + \|\mathcal{P}^{-}v_{k}\| \\
 \lesssim \sum_{j=1}^{m-1}
 \sideset{}{'}\sum_{|I|=p,|J|=q-1}\|L_{j}(\mathcal{P}^{-}u_{IJ})\| + \|u\|\;.
\end{multline}
As above, it suffices to estimate $\|L_{j}(\mathcal{P}^{-}_{s}u_{IJ})\|$, $1\leq s\leq l$.
We want to estimate the terms on the right-hand side of \eqref{8} using \eqref{usualformula}, with $\mathcal{P}^{-}_{s}$ in the place of $\mathcal{P}^{+}_{s}$. In order to do so, we first need to integrate by parts to obtain $\overline{L_{j}}$--terms:
\begin{multline}\label{switch}
\left(L_{j}(\mathcal{P}^{-}_{s}u_{IJ}), L_{j}(\mathcal{P}^{-}_{s}u_{IJ})\right) 
= -\left(\mathcal{P}^{-}_{s}u_{IJ}, \overline{L_{j}}L_{j}\mathcal{P}^{-}_{s}u_{IJ}\right) + \mathcal{O}\left(\|\mathcal{P}^{-}_{s}u_{IJ}\|\|L_{j}(\mathcal{P}^{-}_{s}u_{IJ})\|\right) \\
= - \left(\mathcal{P}^{-}_{s}u_{IJ}, L_{j}\overline{L_{j}}\mathcal{P}^{-}_{s}u_{IJ}\right) +
\left(\mathcal{P}^{-}_{s}u_{IJ}, \left[L_{j}, \overline{L_{j}}\right]\mathcal{P}^{-}_{s}u_{IJ}\right) +
\mathcal{O}\left(\|\mathcal{P}^{-}_{s}u_{IJ}\|\|L_{j}(\mathcal{P}^{-}_{s}u_{IJ})\|\right) \\
= \|\overline{L_{j}}\mathcal{P}^{-}_{s}u_{IJ}\|^{2} + \left(\mathcal{P}^{-}_{s}u_{IJ}, c_{jj}T\mathcal{P}^{-}_{s}u_{IJ}\right) + \mathcal{O}\left(\|\mathcal{P}^{-}_{s}u\|\|L_{j}(\mathcal{P}^{-}_{s}u_{IJ})\| + \|\mathcal{P}^{-}_{s}u_{IJ}\|^{2}\right)\;.
\end{multline}
As in \eqref{usualformula}, we have used here that the undifferentiated term in the second integration by parts can itself be integrated by parts to move the $\overline{L_{j}}$--term to the left as an $L_{j}$--term. Solving this expression for $\|\overline{L_{j}}\mathcal{P}^{-}_{s}u_{IJ}\|^{2}$, substituting the result into (the analogue, for $\mathcal{P}^{-}_{s}u_{IJ}$, of) \eqref{usualformula}, absorbing terms, and taking real parts leads to the analogue of \eqref{Re}. 
\begin{multline}\label{9}
\Vert \bar\partial_M (\mathcal{P}^{-}_{s}u)\Vert^2+\Vert \bar\partial_M^* (\mathcal{P}^{-}_{s}u)\Vert^2 
= \sum_{j=1}^{m-1}\sideset{}{'}\sum_{|I|=p,\vert J\vert=q-1}\left(\Vert L_j (\mathcal{P}^{-}_{s}u_{IJ})\Vert^2 - \text{Re}\left(c_{jj}T\mathcal{P}^{-}_{s}u_{IJ}, \mathcal{P}^{-}_{s}u_{IJ}\right)\right) \\
+ \text{Re}\left( \sideset{}{'}\sum_{|I|=p,\vert K\vert=q-2}\sum_{j,k=1}^{m-1} (c_{jk}T (\mathcal{P}^{-}_{s}u_{IjK}),\mathcal{P}^{-}_{s}u_{IkK})\right) 
+ \mathcal{O}\left(\Vert \mathcal{P}^{-}_{s}u\Vert \Vert \bar{L}\mathcal{P}^{-}_{s}u\Vert + \Vert \mathcal{P}^{-}_{s}u\Vert^2\right)\;.
\end{multline}
We have used that $\text{Re}\left(T\mathcal{P}^{-}_{s}u_{IJ}, \mathcal{P}^{-}_{s}u_{IJ}\right) = \text{Re}\left(\mathcal{P}^{-}_{s}u_{IJ}, T\mathcal{P}^{-}_{s}u_{IJ}\right)$ (on the right-hand side of the first line). Observe that
\begin{multline}
\sum_{j=1}^{m-1}\sideset{}{'}\sum_{|I|=p,\vert J\vert=q-1}\text{Re}\left(c_{jj}T\mathcal{P}^{-}_{s}u_{IJ}, \mathcal{P}^{-}_{s}u_{IJ}\right) \\
= \frac{1}{q-1}\text{Re}\left(\sideset{}{'}\sum_{|I|=p,\vert K\vert=q-2}\;\sum_{j,k=1}^{m-1}\left(\delta_{jk}tr(c_{rd})T\mathcal{P}^{-}_{s}u_{IjK}, \mathcal{P}^{-}_{s}u_{IkK}\right)\right)\;,
\end{multline}
where $\delta_{jk}$ is the Kronecker $\delta$, $tr$ denotes the trace, and $(c_{rd})$ is the matrix of the Levi form. The factor $\frac{1}{q-1}$ arises because each $(q-1)$--tuple $J = (j_{1}, \hdots, j_{q-1})$ arises in precisely $(q-1)$ ways as the increasingly ordered version of a tuple $jK$, namely from $j_{1}K_{1}, \hdots, j_{q-1}K_{q-1}$, where $K_{s}$ omits $j_{s}$ from $J$. Proceeding as in the paragraph following \eqref{Re} results in the following estimate:
\begin{multline}\label{Re3}
 \Vert\bar\partial_M(\mathcal{P}^{-}_{s}u)\Vert^2+\Vert \bar\partial_M^*(\mathcal{P}^{-}_{s}u)\Vert^2   \gtrsim \sum_{j=1}^{m-1}\sideset{}{'}\sum_{|I|=p,\vert J\vert=q-1}\Vert L_j(\mathcal{P}^{-}_{s}u_{IJ})\Vert^2 \\
+  \text{Re}\left(\sideset{}{'}\sum_{|I|=p,\vert K\vert=q-2}\;\sum_{j,k=1}^{m-1}\left(\left(c_{jk}-\frac{1}{q-1}\delta_{jk}tr(c_{rd})\right)\chi_{s}\mathcal{F}^{-1}\tau^-\chi^- \mathcal{F}u_{IjK}, \chi_{s}\mathcal{F}^{-1}\chi^-\mathcal{F}u_{IkK}\right)\right) \\
 + \mathcal{O}(\Vert u\Vert^2)\;.
\end{multline}
The sum of any $(q-1)$ eigenvalues of the matrix $\left(c_{jk}-\frac{1}{q-1}\delta_{jk}tr(c_{rd})\right)$ has the form `sum of $(q-1)$ eigenvalues of $(c_{jk})$ minus the trace of this matrix, and so equals minus the sum of the remaining eigenvalues. Because $M$ is pseudoconvex, minus this latter sum is less than or equal to zero. This means that the form
$\displaystyle{\sideset{}{'}\sum_{|K|=q-2} \sum_{j,k=1}^{m-1}\left(c_{jk}-\frac{1}{q-1}\delta_{jk}tr(c_{rd})\right)b_{jK}\overline{b_{kK}}\leq 0}$ for every $(0,q-1)$--form (see \cite{Straube10a}, Lemma 4.7). Because $\tau^{-}$ is also negative of the support of $\chi^{-}$, 
the Hermitian form (on $u$) inside the Re--term in \eqref{Re3}
satisfies the assumptions in G\aa rding's inequality (see for example \cite{Kohn02}, Lemma 2.5, \cite{LaxNirenberg66}, Theorems 3.1, 3.2), and we obtain the analogue of \eqref{Garding} above. Alternatively, we could argue as outlined in the footnote there. The estimate that then results for $\displaystyle{\sum_{j=1}^{m-1}\sideset{}{'}\sum_{|I|=p,\vert J\vert=q-1}\Vert L_j(\mathcal{P}^{-}_{s}u_{IJ})\Vert^2}$, together with \eqref{8}, allows to finish the proof of (ii) in the same way as that of (i). This completes the proof of Theorem \ref{percolation}.
\end{proof}

\newpage
\begin{remarks}

\medskip 

1) Although we have concentrated on compactness, our methods also give results for subellipticity. For example, to show that 
\begin{equation}\label{+subelliptic}
\|\mathcal{P}^{+}u\|_{\varepsilon}^{2} \lesssim \left(\|\overline{\partial}_{M}u\|^{2} + \|\overline{\partial}_{M}^{*}u\|^{2}\right) + \|u\|_{{-1}}^{2}\;\;, u\in dom(\overline{\partial}_{M})\cap dom(\overline{\partial}_{M}^{*})\; 
\end{equation}
(i.e. subellipticity for $\mathcal{P}^{+}$) holds for $(p,q+1)$--forms if it holds for $(p,q)$--forms, one can follow the proof of (i) in Theorem \ref{percolation} essentially verbatim, replacing $\|\cdot\|$ with $\|\cdot\|_{\varepsilon}$ in the appropriate places. The analogous remark applies to subelliptic estimates for $\mathcal{P}^{-}$.

\medskip

2) In the proofs of Theorem \ref{percolation} we can get by with assuming less than pseudoconvexity, in both (i) and (ii), when we fix the level $q$, in the usual way. For example, in the proof of (ii), right after \eqref{Re3}, we needed to know that minus the sum of the remaining $(m-q)$ eigenvalues is negative. This will be the case if we only assume that the Levi from has the property (at every point) that the sum of the smallest $(m-q)$ eigenvalues is nonnegative. A similar remark applies for the proof of (i). In addition, we do need to have the closed range properties for $\overline{\partial}_{M}$ to have (bounded) Green operators. However, for a particular level of $q$, there are sufficient conditions in terms of positivity of appropriate sums of eigenvalues as well, see `weak condition $Y_{q}$' in  \cite{HarringtonRaich10}.

\end{remarks}

\bigskip
\providecommand{\bysame}{\leavevmode\hbox to3em{\hrulefill}\thinspace}
\nocite{*}


\begin{thebibliography}{10}

\bibitem{Ahn07}
Ahn, Heungju, Global boundary regularity for the $\overline\partial$-equation on $q$-pseudoconvex domains,  \emph{Math. Nachr.} \textbf{280}, no.~4 (2007), 343--350.

\bibitem{Baracco1}
Baracco, Luca, The range of the tangential Cauchy--Riemann system to a CR embedded
manifold, \emph{Invent. Math.} \textbf{190}, (2012), 505--510.

\bibitem{Baracco2}
\bysame, Erratum to: The range of the tangential Cauchy--Riemann system to a CR
embedded manifold, \emph{Invent. Math.} \textbf{190}, (2012), 511--512.

\bibitem{Baracco3}
\bysame, Boundaries of analytic varieties, preprint, arXiv:1211.0787.



\bibitem{BER99}
M.~Salah Baouendi, Peter Ebenfelt, and Linda Preiss Rothschild, \emph{Real Submanifolds
in Complex Space and Their Mappings}, Princeton University Press, Princeton, 1999.




\bibitem{BiardStraube16}
Biard, S\'{e}verine and Straube, Emil J., $L^2$-Sobolev theory for the complex Green operator, 2016, arXiv:1606.00728v1.




\bibitem{BoasStraube91}
Boas, Harold P. and Straube, Emil J., Sobolev estimates for the complex Green operator on
a class of weakly pseudoconvex boundaries, \emph{Commun. Partial Diff. Equations}
\textbf{16}, no.10 (1991), 1573--1582.



\bibitem{Boggess91}
Boggess, Albert, \emph{CR-Manifolds and the Tangential Cauchy-Riemann Complex}, Studies in Advanced Mathematics, CRC Press 1991.

\bibitem{Boggess16}
\bysame, private communication.






\bibitem{ChenShaw01}
Chen, So-Chin and Shaw, Mei-Chi, \emph{Partial Differential Equations in
Several Complex Variables}, Studies in Advanced Mathematics 19, Amer. Math. Soc./International Press, 2001.












\bibitem{FuStraube98}
Fu, Siqi and Straube, Emil J., Compactness of the
$\overline{\partial}$-{N}eumann problem on convex domains, \emph{J. Funct.
Anal.} \textbf{159} (1998), 629--641.





\bibitem{HarringtonRaich10}
Harrington, Phillip S. and Raich, Andrew, Regularity results for
$\overline{\partial}_{b}$
on CR-manifolds of hypersurface type, \emph{Commun. Partial Diff. Equations} \textbf{36},
no.1 (2011), 134--161.






\bibitem{Khanh16}
Khanh, Tran Vu, Equivalence of estimates on a domain and its boundary, \emph{Vietnam J. Math.} \textbf{44} (2016), no. 1, 29--48.





\bibitem{Koenig02}
Koenig, Kenneth D., On maximal Sobolev and H\"{o}lder estimates for the tangential Cauchy-Riemann operator and boundary Laplacian, \emph{Amer. J. Math.} \textbf{124} (2002), 129--197.

\bibitem{Koenig04}
\bysame, A parametrix for the $\overline{\partial}$-Neumann problem on pseudoconvex
domains of finite type, \emph{J. Func. Anal.} \textbf{216} (2004), 243--302.




\bibitem{Kohn85}
Kohn, J.~J., Estimates for $\overline{\partial}_{b}$ on pseudoconvex CR manifolds,
Pseudodifferential Operators and Applications (Notre Dame, Ind., 1984),
\emph{Proc. Sympos. Pure Math.} \textbf{43}, Amer. Math. Soc., Providence, RI (1985),
207--217.

\bibitem{Kohn81}
\bysame, Boundary regularity of $\overline{\partial}$, \emph{Recent Developments in Several Complex Variables} (Proc. Conf., Princeton Univ., Princeton, N. J., 1979), pp. 243–-260, Ann. of Math. Stud. \textbf{100}, Princeton Univ. Press, Princeton, N.J., 1981.


\bibitem{Kohn02}
\bysame, Superlogarithmic estimates on pseudoconvex domains and CR manifolds,
\emph{Ann.of Math.}(2) \textbf{156} (2002), 213--248.



\bibitem{LaxNirenberg66}
Lax, P.~D. and Nirenberg, L., On stability for difference schemes: a sharp from of G\aa rding's inequality, \emph{Commun. Pure Appl. Math.} \textbf{19} (4) (1966), 473--492.





\bibitem{Morita01}
Morita, Shigeyuki, \emph{Geometry of Differential Forms}, Translations of Mathematical Monographs 201, American Mathematical Society, 2001.



\bibitem{MunasingheStraube12}
Munasinghe, Samangi and Straube, Emil J., Geometric sufficient conditions for compactness
of the complex Green operator, \emph{ J. Geom. Anal.} \textbf{22}, no.4 (2012),
1007--1026.

\bibitem{Nicoara06}
Nicoara, Andreea C., Global regularity for $\overline\partial\sb b$ on weakly
pseudoconvex CR manifolds, \emph{Adv. Math.} \textbf{199}, no.2  (2006), 356--447.

\bibitem{Raich10}
Raich, Andrew S., Compactness of the complex Green operator on CR-manifolds of
hypersurface type, \emph{Math. Ann.} \textbf{348} (2010), 81--117.

\bibitem{RaichStraube08}
Raich, Andrew S. and Straube, Emil J., Compactness of the complex Green operator,
\emph{Math. Res. Lett.} \textbf{15}, no.~4 (2008), 761--778.

\bibitem{Range86}
Range, R. Michael, \emph{Holomorphic Functions and Integral Representations in Several Complex Variables}, Graduate Texts in Mathematics 108, Springer, 1986.





\bibitem{Shaw85a}
Shaw, Mei-Chi, Global solvability and regularity for $\bar \partial$ on an annulus between two weakly pseudoconvex domains, \emph{Trans. Amer. Math. Soc.} \textbf{291}, no.~1  (1985), 255--267.



\bibitem{Stein93}
Stein, Elias M., \emph{Harmonic Analysis}, Princeton Mathematical Series 43, Princeton University Press, 1993.




\bibitem{Straube10a}
Straube, Emil J., \emph{Lectures on the $\mathcal{L}^{2}$-Sobolev Theory of the
$\overline{\partial}$-Neumann Problem}, ESI Lectures in Mathematics and Physics, European
Math. Society, Z\"{u}rich, 2010.





\bibitem{StraubeZeytuncu15}
Straube, Emil J. and Zeytun\c{c}u, Yunus E., Sobolev estimates for the complex Green operator on {CR} submanifolds of hypersurface type , \emph{Invent. Math.} \textbf{201}, (2015), 1073--1095.









\end{thebibliography}
\end{document}